\numberwithin{equation}{section}
\def\NN{\mbox{$I\hspace{-.06in}N$}}
\def\CC{\mbox{$C\hspace{-.11in}\protect\raisebox{.5ex}{\tiny$/$}
\hspace{.06in}$}}
\def\h{\hspace*{.24in}}
\newtheorem{theorem}{Theorem}
\newtheorem{definition}[theorem]{Definition}
\newtheorem{lemma}{Lemma}
\newtheorem{proposition}{Proposition}
\newtheorem{corollary}{Corollary}
\begin{document}
 \title{Nearly-optimal estimates for the stability problem in Hardy spaces}
    \author{Dang Duc Trong}
    \address{Department of Mathematics, Hochiminh City National University, 227 Nguyen Van Cu, Q5, HoChiMinh City, Vietnam}
    \email{ddtrong@mathdep.hcmuns.edu.vn}
\author{Tuyen Trung Truong}
    \address{Department of mathematics, Indiana University Bloomington, IN 47405 USA}
 \email{truongt@indiana.edu}
    \date{\today}
    \keywords{Blaschke functions; Hardy spaces; Non-tangential limits; Nearly-optimal bounds; Stolz angles.}
    \subjclass[2000]{ 30D15, 31A15, 44A10, 65F22.}
\begin{abstract}
We continue the work of \cite{TLNT}. Let $E$ be a non-Blaschke subset of the unit disc $\mathbb{D}$ of the complex plane $\mathbb{C}$. Fixed $1\leq p\leq
\infty$, let $H^p(\mathbb{D})$ be the Hardy space of holomorphic functions in the disk whose boundary value function is in $L^p(\partial \mathbb{D})$.
Fixed $0<R<1$. For $\epsilon >0$ define
\begin{eqnarray*}
C_p(\varepsilon, R) = \sup \{\sup_{|z| \leq R}|g(z)|: g\in H^p,\, \|g\|_p\leq 1,\,|g(\zeta)| \leq \varepsilon\;\forall \zeta\in E\}.
\end{eqnarray*}
In this paper we find upper and lower bounds for $C_p(\epsilon ,R)$ when $\epsilon$ is small for any non-Blaschke set $E$. The bounds are nearly-optimal
for many such sets $E$, including sets contained in a compact subset of $\mathbb{D}$ and sets contained in a finite union of Stolz angles.
\end{abstract}
\maketitle
\section{Introduction}
This work is a continuation of \cite{TLNT}. The purpose of this paper is to find good estimates for the stability problem of approximating analytic
functions in Hardy spaces.

Let $E$ be a subset of the unit disc $\mathbb{D}$ of the complex plane $\CC$. To avoid trivial counter-examples, we assume throughout this paper that $E$
is non-Balschke, that is
\begin{description}
\item[(B)] $E$ contains a non-Blaschke sequence $(z_j)$, that is, a sequence satisfying the condition
\[
\displaystyle\sum_{j=1}^\infty(1-|z_j|) = \infty.
\]
\end{description}
Also without loss of generality, we assume throughout that $E$ is relatively closed in $\mathbb{D}$, that is if $\overline{E}$ is the closure of $E$ in
the usual topology in $\mathbb{C}$ then $\overline{E}\cap \mathbb{D}=E$.

Fixed $1\leq p\leq \infty$, recall that the Hardy space $H^p(\mathbb{D})$ is the space of all holomorphic functions $g$ on $\mathbb{D}$ for which
$\|g\|_p$ $<$ $\infty$, where
\begin{align*}
\|g\|_p &= \lim_{r\uparrow 1} \left\{\frac{1}{2\pi}\int_0^{2\pi}|g(re^{i\theta})|^p d\theta\right\}^{1/p}\h(1 \leq p < \infty),\\
\|g\|_{\infty} &= \lim_{r\uparrow 1} \sup_{\theta} |g(re^{i\theta})|.
\end{align*}
For convenience, from now on, we will denote $H^p(\mathbb{D})$ by $H^p$. We define $\mathcal{A}^p$ to be the functions in $H^p$ with norm $1$, that is
\begin{equation}
\mathcal{A}^p=\{f:~f\in H^p, ||f||_{H^p}=1\}.\label{EquationSpaceAp}
\end{equation}
If $f\in \mathcal{A}^p$ it follows that (see Section \ref{SectionInterpolationByBalschkeProducts})
\begin{equation}
|f(z)|\leq \frac{1}{(1-|z|^2)^{1/p}}, \label{EquationEstimateForFunctionsInSpaceAp}
\end{equation}
for all $z\in \mathbb{D}$.

If $f$ is a function in $H^p(\mathbb{D})$ then it is well-known that $f$ can be reconstructed from its values $f(\zeta )$ at points $\zeta\in E$ (see
Theorem \ref{TheoremRecovery}). However, in practice, it is usually the case that we do not know exact values $f(\zeta )$, but only approximate values.
This leads to the stability problem, that of estimating the quantity
\begin{equation}
C_p(E,\varepsilon, R) = \sup \{\sup_{|z| \leq R}|g(z)|: g\in \mathcal{A}^p,|g(\zeta)| \leq \varepsilon\;\forall \zeta\in E\}, \label{CeRdefinition}
\end{equation}
for positive $\varepsilon$ and $R$ in $(0, 1)$. We can also consider the problem of one-point estimation, which is estimating the number
\begin{equation}
C_p(E,\varepsilon, 0) = \sup \{|g(0)|: g\in \mathcal{A}^p,|g(\zeta)| \leq \varepsilon\;\forall \zeta\in E\}. \label{Ce0definition}
\end{equation}
Since $E$ satisfies (B), it is well-known that
\[
\lim_{\varepsilon \rightarrow 0}C_p(\varepsilon, R) = 0.
\]

This problem of estimating $C_p(E,\epsilon ,R)$ was thoroughly explored by many authors. Let us recall some of the results known in literature.

In \cite{LRS}, Lavrent'ev, Romanov and Shishat-skii used a certain characteristic of the projection of $E$ onto the real axis, to show that if $E$
$\subset$ $U$ $=$ $\{z: |z| \leq 1/4\}$ then $C_p(\varepsilon, R)$ $\leq$ $\max\{\varepsilon^{4/25},(6/7)^{n(\varepsilon)}\}$ for all $R$ $\in$
$(0,1/4)$, in which $n(\varepsilon)$ $\rightarrow$ $\infty$ as $\varepsilon$ $\rightarrow$ $0$. This approach is quite interesting in that $E$ could be a
sequence. However in their approach the set $E$ is strictly contained inside $\mathbb{D}$, and only upper bounds are obtained.

In a series of works (\cite{osi}, \cite{osi7}, \cite{osi6}, \cite{osi5}, \cite{osi4}, \cite{osi3} and \cite{osi2}), Osipenko obtained optimal estimates
for some special sets $E$. For example, when $E$ is contained in the real open interval $(-1,1)$ and satisfies some more constrains, he showed that the
optimal value of $C_p(E,\epsilon ,0)$ is obtained at a finite Balschke product $B(z)$ with all zeros in $E$, that is
\begin{eqnarray*}
B(z)=\prod _{j=1}^n\frac{z-z_j}{1-\overline{z}_jz},
\end{eqnarray*}
and here $z_j\in \overline{E}\subset \mathbb{D}$. It is interesting that here the set $E$ needs not to be contained in a compact set of $(-1,1)$.
However, his method seems not applicable to more general sets $E$.

In case $p=\infty$, it is well-known that the set of boundary limit points, or more exactly non-tangential limit points, of $E$ plays an important role
in estimating $C_{\infty}(E,\epsilon ,R)$. Let us first recall the definition of non-tangential limit points $E_0$ of $E$ (see \cite{hay}):
\begin{definition}
For each set $E$ of $\mathbb{D}$, we denote by $E_0$ the set of nontangential limit points of $E$, that is, points $\zeta$ of $\partial \mathbb{D}$ being
such that there exists a sequence $(z_n)$ in $E$ which tends nontangentially to $\zeta$, that is, such that
\begin{eqnarray*}
z_n\rightarrow \zeta,~|z_n-\zeta|=O(1-|z_n|).
\end{eqnarray*}\label{DefinitionNonTangentialLimitPoints}
\end{definition}

Let $m(E_0)$ be the Lebesgue measure of $E_0$ as a subset of $\partial \mathbb{D}$. If $m(E_0)>0$, we can use the harmonic measure $\omega (z)$ of $E_0$
to obtain the following estimate (see the Appendix):
\begin{equation}
\epsilon \leq C_{p}(E,\epsilon ,R)\leq \frac{2^{1/p}}{(1-R^2)^{1/p}}\sup _{|z|\leq R}\epsilon ^{\omega (z)}.
\label{EquationEstimateCaseE0HasPositiveMeasure}
\end{equation}
Hence in case $m(E_0)>0$ we obtain a quasi-polynomial estimate for $C_p(E,\epsilon ,R)$.

The main purpose of this paper is to obtain good upper and lower bounds for $C_p(E,\epsilon ,R)$ for the remaining case when $m(E_0)=0$ in such a way to
extend the above mentioned results of Lavrente's et al. and Osipenko. Our idea consists of two steps:

-Step 1: Use the interpolation by finite Balschke product to reduce estimating $C_p(E,\epsilon ,R)$ to estimating of some expressions depending only on
$\epsilon$ and finite Blaschke products with all zeros in $E$. This step 1 was already done in our previous paper (see Section 3 in \cite{TLNT}), where
an algorithm for choosing the interpolation points was proposed. However that algorithm depends on the ordering of the sequence $(z_k)$, and the method
used there does not allow obtaining lower bounds for $C_{p}(E,\epsilon ,R)$. We propose a better algorithm in Step 2 below, which allows us to obtain
both upper and lower estimates for $C_{p}(E,\epsilon ,R)$, and to obtain nearly-optimal estimates for many sets $E$ (see Corollaries
\ref{CorollaryCaseEIsCompact} and \ref{CorollaryCaseEIsInStolzAngles}).

-Step 2: For any $n\geq 0$, assigns a number $M_n(E)$ using finite weighted-Blaschke products (see Definition \ref{DefinitionWeightedBlaschkeProducts})
to construct set functions for $E$. Then we use these functions $M_n(E)$ to estimate the expressions in Step 1.

Explicitly we fix a bounded holomorphic function $q(z)$ in $\mathbb{D}$ satisfying the following conditions: $q(z)\not= 0$ for all $z\in \mathbb{D}$ and
\begin{equation}
\lim _{z\in E,z\rightarrow \partial\mathbb{D}}q(z)=0.\label{EquationTheFunctionQ}
\end{equation}
The function $q(z)$ mentioned above is provided by the following Theorem by Hayman\cite{hay}:
\begin{theorem}
If the set $E_0$ of nontangential limit points of a set $E$ has positive linear measure and if $f$ is a bounded analytic function satisfying
\begin{eqnarray*}
\lim _{z\in E,~|z|\rightarrow 1}f(z)=0,
\end{eqnarray*}
then $f\equiv 0$. Conversely, if $E_0$ has measure zero, then there exists $f(z)$, such that $0<|f(z)|<1$ in $U$, and satisfying
\begin{eqnarray*}
\lim _{z\in E,~|z|\rightarrow 1}f(z)=0.
\end{eqnarray*}
\label{TheoremHayman}\end{theorem}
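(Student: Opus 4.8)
The plan is to treat the two implications separately; the first is a soft consequence of boundary-value theory, while the second requires an explicit construction and is where the real work lies.

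\emph{The direct implication ($m(E_0)>0 \Rightarrow f\equiv 0$).} Since $f$ is bounded and analytic on $\mathbb{D}$, Fatou's theorem guarantees that the nontangential limit $f^*(\zeta)=\lim_{z\to\zeta}f(z)$ exists for almost every $\zeta\in\partial\mathbb{D}$. I would fix such a $\zeta$ that in addition lies in $E_0$. By Definition \ref{DefinitionNonTangentialLimitPoints} there is a sequence $(z_n)$ in $E$ with $z_n\to\zeta$ and $|z_n-\zeta|=O(1-|z_n|)$, i.e. $(z_n)$ stays in a fixed Stolz angle at $\zeta$. Because the nontangential limit exists we get $f^*(\zeta)=\lim_n f(z_n)$, and the hypothesis $\lim_{z\in E,\,|z|\to1}f(z)=0$ forces $f(z_n)\to 0$; hence $f^*(\zeta)=0$. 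Thus $f^*$ vanishes on a subset of $E_0$ of full measure, which has positive measure in $\partial\mathbb{D}$. Finally I would invoke the uniqueness theorem for the Nevanlinna class: a nonzero bounded analytic function has $\log|f^*|\in L^1(\partial\mathbb{D})$, so its boundary values cannot vanish on a set of positive measure. Therefore $f\equiv 0$.

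\emph{The converse ($m(E_0)=0 \Rightarrow$ existence), reduction.} The first move is to pass from an analytic to a harmonic target. It suffices to produce a \emph{positive} harmonic function $u$ on $\mathbb{D}$ with
\[
\lim_{z\in E,\,|z|\to 1}u(z)=+\infty;
\]
for then, letting $\widetilde{u}$ denote a harmonic conjugate, the function $f=\exp(-(u+i\widetilde{u}))$ is analytic and zero-free, satisfies $|f|=e^{-u}\in(0,1)$ since $u>0$, and obeys $|f(z)|=e^{-u(z)}\to 0$ as $z\to\partial\mathbb{D}$ through $E$. So the whole problem reduces to the construction of $u$.

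\emph{The construction.} To build $u$ I would exploit $m(E_0)=0$ through a covering/summation scheme. Writing $u$ as a convergent sum $u=\sum_k u_k$ of elementary positive harmonic functions (Poisson kernels of masses $\mu_k$), I would subordinate the pieces $u_k$ to a covering of the approach regions of $E$ near $\partial\mathbb{D}$, arranging that each $u_k$ is large on the corresponding portion of $E$ while contributing only a controlled amount to $u$ elsewhere. The smallness of $E_0$ is precisely what should make the total mass $\sum_k \mu_k$ summable, so that $u=P[\mu]$ remains finite — a genuine positive harmonic function — on all of $\mathbb{D}$ even as $u\to\infty$ along $E$.

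\emph{The main obstacle.} The difficulty is entirely in this last construction, and specifically in the boundary points that $E$ approaches \emph{tangentially}. At a nontangential limit point the Poisson kernel of a concentrated mass already blows up, but the set of such points must have measure zero because $P[\mu]$ has finite nontangential limits almost everywhere; this is consistent with, and forced by, $m(E_0)=0$. The genuinely subtle case is when $E$ clusters on a \emph{positive-measure} boundary set but only along tangential (horocyclic) paths: one must then tune the masses $\mu_k$ and their supports to the horocyclic thickness of $E$ so that $u$ blows up along these thin tangential sequences while keeping its nontangential boundary values finite almost everywhere. Making this quantitative balance work — blow-up along $E$ versus global finiteness of the positive harmonic function — is the crux, and it is here that the hypothesis $m(E_0)=0$ must be used in an essential rather than merely qualitative way.
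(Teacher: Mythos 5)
Your first implication is complete and correct: Fatou's theorem gives the nontangential boundary function $f^*$ almost everywhere, the nontangential approach built into Definition \ref{DefinitionNonTangentialLimitPoints} lets you conclude $f^*(\zeta)=0$ at almost every $\zeta\in E_0$, and the classical uniqueness theorem ($\log|f^*|\in L^1(\partial\mathbb{D})$ for a nonzero bounded analytic function) then forces $f\equiv 0$ when $m(E_0)>0$. The reduction of the converse to producing a positive harmonic function $u$ with $u\to\infty$ along $E$, followed by $f=\exp(-(u+i\widetilde{u}))$ (legitimate since $\mathbb{D}$ is simply connected, so the conjugate $\widetilde{u}$ exists), is also sound.

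However, the converse direction --- which is the hard half, the actual content of the theorem, and the only part this paper uses (it is what supplies the weight $q$ in (\ref{EquationTheFunctionQ})) --- is not proved. What you offer is a program: write $u=\sum_k P[\mu_k]$ as a sum of Poisson integrals, subordinate the masses $\mu_k$ to a covering of the approach regions of $E$, and tune them so that the total mass is finite while $u$ blows up along $E$. You then state yourself that making this balance work, especially where $E$ accumulates tangentially on a possibly positive-measure boundary set, ``is the crux.'' That crux is precisely Hayman's construction, and no part of your text performs it: there is no choice of the measures $\mu_k$, no covering lemma, and no point where $m(E_0)=0$ is quantitatively converted into summability of the masses. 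For comparison, the paper itself does not prove this statement either; it invokes it as a theorem of \cite{hay}, and in Section \ref{SectionCorollaryAndExample} it recalls the ingredients of Hayman's actual construction --- the thickened set $E'$, the function $\rho(\theta)$, and the sets $E_r$ with $m(E_r)\to 0$ as $r\to 1$ --- which is exactly the machinery your sketch would have to be turned into: one picks $r_k\uparrow 1$ using the decay of $m(E_{r_k})$ and builds the harmonic (or subharmonic) majorant from contributions supported on the $E_{r_k}$, checking both convergence and blow-up along $E$. As it stands, your proposal proves the easy implication and reduces the difficult one to a restatement of itself.
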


Before stating our main results, let us fix some notations.
\begin{definition}
We will use the notation $Z_n=\{z_1,\ldots ,z_n\}$ to denote a tuple of $n$-points $z_1,\ldots ,z_n\in \mathbb{D}$. If $j\in \{1,\ldots ,n\}$ we define
$Z_{n,j}=\{z_1,\ldots ,z_n\}\backslash \{z_j\}$. Define $B(Z_n,z)$ to be the Blaschke product with zeros in $Z_n$:
\begin{eqnarray*}
B(Z_n,z)=\prod _{j=1}^n\frac{z-z_j}{1-\overline{z}_jz}.
\end{eqnarray*}
Similarly define $B(Z_{n,k},z)$ to be the Blaschke product with zeros in $Z_{n,k}$:
\begin{eqnarray*}
B(Z_{n,k},z)=\prod _{1\leq j\leq n, ~j\not= k}\frac{z-z_j}{1-\overline{z}_jz}.
\end{eqnarray*}

For a fixed function $q(z)$, the weighted Blaschke product $B_q(Z_n,z)$ is defined as
\begin{eqnarray*}
B_q(Z_n,z)=q(z)\prod _{j=1}^n\frac{z-z_j}{1-\overline{z}_jz}.
\end{eqnarray*}
\label{DefinitionWeightedBlaschkeProducts}\end{definition}

Let $q(z)$ be a function provided by Theorem \ref{TheoremHayman}. Let us define
\begin{equation}
g(E,\epsilon ,R,q)=\sup \{\sup _{|z|\leq R}|B_q(Z_n;z)|:~n\in \mathbb{N},~Z_n\in E^n,~|B_q(Z_n;\zeta )|\leq \epsilon~\forall \zeta \in
E\}.\label{EquationOfFunctionG}
\end{equation}
(Note that $g(E,\epsilon ,R,q)$ does not depend on $p$.)

\begin{theorem} Let $E\subset \mathbb{D}$ be such that $m(E_0)=0$. Fix $1\leq p\leq \infty$ and $0<R<1$. Let $q(z)$ be a function provided by Theorem \ref{TheoremHayman},
normalized by $||q||_{\infty}=1$  where $||q||_{\infty}$ is its usual sup-norm. Define $g(E,\epsilon ,R,q)$ as in (\ref{EquationOfFunctionG}). Then there
exists $\epsilon _0>0$ depending on $E$ and $q(z)$, and a non-increasing function $\varphi :(0,\epsilon _0)\rightarrow (0,\infty )$ also depending on $E$
and $q(z)$ satisfying
\begin{eqnarray*}
\lim _{\epsilon \rightarrow 0}\varphi (\epsilon )=0,
\end{eqnarray*}
, a constant $K>0$ depending only on $p$ and $R$, and a constant $\alpha >0$ depending only on $R$, such that for all $0<\epsilon <\epsilon _0$ we have
\begin{equation}
g(E,\epsilon ,R,q)\leq C_p(E,\epsilon ,R)\leq K\times |q(0)|^{-\alpha}\times g^{\alpha}(E,\varphi (\epsilon
),R,q).\label{EquationEstimateCaseE0HasZeroMeasure}
\end{equation}
\label{TheoremMain}
\end{theorem}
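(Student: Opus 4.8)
The plan is to prove the two inequalities in \eqref{EquationEstimateCaseE0HasZeroMeasure} separately, the left one being essentially formal and the right one carrying all the weight. For the lower bound $g(E,\epsilon,R,q)\le C_p(E,\epsilon,R)$, I would argue that every weighted Blaschke product competing in the supremum \eqref{EquationOfFunctionG} is itself an admissible test function for $C_p(E,\epsilon,R)$. Fix $Z_n\in E^n$ with $|B_q(Z_n;\zeta)|\le\epsilon$ for all $\zeta\in E$. Since the ordinary factor $B(Z_n,\cdot)$ is a \emph{finite} Blaschke product it has unimodular boundary values a.e., so $|B_q(Z_n;e^{i\theta})|=|q(e^{i\theta})|$ for a.e. $\theta$; integrating gives $\|B_q(Z_n;\cdot)\|_p=\|q\|_p\le\|q\|_\infty=1$ for every $p$ (the case $p=\infty$ being immediate from $|B|\le 1$). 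Thus $B_q(Z_n;\cdot)$ has $H^p$-norm at most $1$ and modulus at most $\epsilon$ on $E$, hence it lies in the competing family for $C_p(E,\epsilon,R)$. Therefore $\sup_{|z|\le R}|B_q(Z_n;z)|\le C_p(E,\epsilon,R)$, and taking the supremum over all admissible tuples $Z_n$ and all $n$ yields the claim.

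For the upper bound I would combine the interpolation reduction of Step~1 from \cite{TLNT} with a weighting argument based on $q$. Given an arbitrary admissible $g$ (so $\|g\|_p\le 1$ and $|g|\le\epsilon$ on $E$), Step~1 produces a finite tuple $Z_n\subset E$ whose Blaschke product $B(Z_n,\cdot)$ is small on all of $E$ and controls $\sup_{|z|\le R}|g|$; the pointwise bound \eqref{EquationEstimateForFunctionsInSpaceAp} supplies the $p$-dependence of the multiplicative constant, while the exponent relating the two suprema is of purely geometric origin, arising from a Schwarz--Pick / subharmonic comparison between the disc $\{|z|\le R\}$ and its center. This is precisely the source of the exponent $\alpha=\alpha(R)$. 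I would then pass from $B(Z_n,\cdot)$ to the weighted product $B_q(Z_n,\cdot)=q\,B(Z_n,\cdot)$. Because $q$ is zero-free with $\|q\|_\infty=1$, the function $-\log|q|$ is nonnegative and harmonic, so Harnack's inequality gives $|q(z)|\ge|q(0)|^{(1+R)/(1-R)}$ on $|z|\le R$; this non-degeneracy makes the weighting reversible with a loss measured by a power of $|q(0)|$, and after the center-normalization it is exactly this factor that materializes as $|q(0)|^{-\alpha}$.

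To conclude I would define $\varphi(\epsilon)$ to be the bound furnished by Step~1 on $\sup_{\zeta\in E}|B(Z_n,\zeta)|$; since $\|q\|_\infty=1$, the same quantity bounds $\sup_{\zeta\in E}|B_q(Z_n,\zeta)|$, so $B_q(Z_n,\cdot)$ is one of the functions over which the supremum \eqref{EquationOfFunctionG} is taken at level $\varphi(\epsilon)$, whence $\sup_{|z|\le R}|B_q(Z_n,z)|\le g(E,\varphi(\epsilon),R,q)$. Substituting into the estimate of the previous paragraph gives $\sup_{|z|\le R}|g(z)|\le K\,|q(0)|^{-\alpha}\,g^{\alpha}(E,\varphi(\epsilon),R,q)$ with $K=K(p,R)$, and taking the supremum over all admissible $g$ completes the proof. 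Monotonicity of $\varphi$ and $\lim_{\epsilon\to0}\varphi(\epsilon)=0$ would follow from the quantitative dependence of the Step~1 construction on $\epsilon$; I would also need to verify that a single exponent $\alpha(R)$ can be made to serve simultaneously in the factors $|q(0)|^{-\alpha}$ and $g^{\alpha}$, which is the most delicate piece of the bookkeeping since $g(E,\varphi(\epsilon),R,q)\le 1$ and $|q(0)|\le 1$ leave no slack to adjust exponents after the fact.

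The main obstacle is the construction of the tuple $Z_n\subset E$ underlying Step~1 --- the ``better algorithm'' advertised in Step~2 --- so that the weighted product $B_q(Z_n,\cdot)$ is forced to be small on \emph{all} of $E$, not merely at the interpolation nodes. The decisive structural point is the interplay between the near-boundary part of $E$, where smallness is supplied for free by the vanishing condition \eqref{EquationTheFunctionQ} on $q$, and the relatively compact super-level sets $\{\zeta\in E:|q(\zeta)|\ge\delta\}$, where smallness must instead be produced by a careful placement of finitely many zeros drawn from the non-Blaschke sequence guaranteed by hypothesis (B). Quantifying this trade-off is exactly what produces the function $\varphi$ with $\varphi(\epsilon)\to0$, and it is here that the standing assumption $m(E_0)=0$ (which, via Theorem~\ref{TheoremHayman}, is what makes a suitable $q$ exist at all) is indispensable.
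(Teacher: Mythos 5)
Your lower bound argument is fine and is essentially what the paper does (any admissible weighted Blaschke product has $\|B_q(Z_n,\cdot)\|_p\le\|q\|_\infty=1$, hence competes in $C_p$). But the upper bound, as you yourself concede in the last paragraph, is not proved: you have described the architecture (interpolation via Theorem \ref{TheoremRecovery}, an exponent $\alpha(R)$ of Jensen/Schwarz--Pick origin, a $|q(0)|$-loss from the weight) while deferring exactly the step that carries all the content. The issue is quantitative, not merely constructive. The interpolation bound has the form
\begin{equation*}
C_p(E,\epsilon ,R)\;\leq\; K\,\epsilon\,\mu (Z_n)\,\max_{1\leq k\leq n}\sup _{|z|\leq R}|B(Z_{n,k},z)|\;+\;K\sup _{|z|\leq R}|B(Z_n,z)|,
\end{equation*}
where $\mu(Z_n)=\sum_k 1/|B(Z_{n,k},z_k)|$ comes from the coefficients (\ref{rtckexpr}). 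Making $B_q(Z_n,\cdot)$ small on all of $E$ forces $n$ to grow, which inflates $\mu(Z_n)$; the theorem holds only if one can choose nodes so that \emph{simultaneously} $\sup_{\zeta\in E}|B_q(Z_n,\zeta)|\le\varphi(\epsilon)\to 0$ and $\epsilon\,\mu(Z_n)\le 1$. Your proposal never exhibits a mechanism for this. The paper's mechanism is precisely the machinery of Section \ref{SectionSetFunctions}: the weighted Fekete-type extremal configurations (Definition \ref{DefinitionOptimalSetFunctions}), the key inequality $\mu(\zeta_1,\ldots,\zeta_{n+1})\,M(z_1,\ldots,z_n)\le n+1$ (Proposition \ref{Mu_TildeMRelationProp}), the convergence $M_n(E)\to 0$ (Proposition \ref{ConvergenceOfTildeV_TildeMProp} --- this is where Hayman's function $q$ and the hypothesis $m(E_0)=0$ actually get used), and the definition (\ref{EquationOfPhi}) of $\varphi$ as the inverse of $x\mapsto h(x)/(x+1)$ for a continuous majorant $h$ of $M_n$. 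With $n_0$ chosen so that $M_{n_0}(E)\le\varphi(\epsilon)<M_{n_0-1}(E)$ and $Z_{n_0}$ extremal, these ingredients give $\epsilon\,\mu(Z_{n_0})\le 1$, which is the identity your sketch needs and cannot produce. Defining $\varphi$ as ``the bound furnished by Step~1'' is circular, since Step~1 (the algorithm of \cite{TLNT}) is exactly what the paper replaces here because it does not yield this control.

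Two smaller points. First, your Harnack route to the weight ($|q(z)|\ge |q(0)|^{(1+R)/(1-R)}$ on $|z|\le R$) is correct but produces an exponent on $|q(0)|$ different from the exponent $\alpha$ on $g$, and, as you note, $g\le 1$ and $|q(0)|\le 1$ leave no slack to merge them into the single exponent of (\ref{EquationEstimateCaseE0HasZeroMeasure}); the paper avoids this entirely by applying Jensen's formula to the weighted product $B_q$ itself (Lemma \ref{LemmaSomeEstimatesForBlaschkeProducts}), so that $|q(0)|$ enters to the first power and the same $\alpha$ from Lemma \ref{LemmaExistenceOfAlpha} serves both factors. Second, it is the weighted product $B_q(Z_n,\cdot)$, not the bare $B(Z_n,\cdot)$, that is small on all of $E$; near $\partial\mathbb{D}$ the smallness comes from $q$, which your final paragraph recognizes but your third paragraph (``Step~1 produces a tuple whose Blaschke product is small on all of $E$'') contradicts.
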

A class of sets $E$ satisfying the condition $m(E_0)=0$ are those contained in a finite union of Stolz angles, which we recall in the following
\begin{definition}
Let $\zeta \in \partial \mathbb{D}$. A Stolz angle with vertex $\zeta$ is a set of the form
\begin{eqnarray*}
\Omega _{\sigma }(\zeta ):=\{z\in \mathbb{D}:~|1-\overline{z}\zeta|\leq \sigma (1-|z|)\},
\end{eqnarray*}
where $\sigma \geq 1$ is some constant. \label{DefinitionStolzAngle}\end{definition}

The following corollaries can be considered as extensions of above results of Lavrent'ev et al. and Osipenko:
\begin{corollary}
If $E$ is a compact subset in $\mathbb{D}$ then there exist constants $K>0$ and $\epsilon _0>0$ depending only on $p$ and $R$, and there exists a
constant $\alpha >0$ depending only on $R$ such that for all $0<\epsilon <\epsilon _0$, there exists a finite Blaschke product $B(z)$ with all zeros in
$E$ such that
\begin{eqnarray*}
\sup _{|z|\leq R}|B(z)|\leq C_p(E,\epsilon ,R)\leq K\times \sup _{|z|\leq R}|B(z)|^{\alpha}.
\end{eqnarray*}
\label{CorollaryCaseEIsCompact}\end{corollary}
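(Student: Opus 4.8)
The plan is to deduce the Corollary directly from Theorem~\ref{TheoremMain} by specializing the auxiliary function $q$ and then collapsing the two error levels that appear in the sandwich into a single Blaschke product. First I would note that a compact $E\subset\mathbb{D}$ lies in some disc $\{|z|\le\rho\}$ with $\rho<1$, so $E$ has no boundary (hence no nontangential) limit points and $m(E_0)=0$; Theorem~\ref{TheoremMain} therefore applies. Moreover the defining condition \eqref{EquationTheFunctionQ} is vacuous for such $E$, so I may take $q\equiv1$, which is nonvanishing, bounded, and has $\|q\|_\infty=1$ and $|q(0)|=1$. Then the weighted Blaschke products of Definition~\ref{DefinitionWeightedBlaschkeProducts} reduce to ordinary ones, the factor $|q(0)|^{-\alpha}$ disappears, and writing $g(E,\delta,R)$ for the resulting quantity \eqref{EquationOfFunctionG} the theorem yields, for $0<\epsilon<\epsilon_0$,
\begin{equation*}
g(E,\epsilon,R)\le C_p(E,\epsilon,R)\le K\,g(E,\varphi(\epsilon),R)^{\alpha},
\end{equation*}
with $K=K(p,R)$ and $\alpha=\alpha(R)$.

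Next I would show that the supremum defining $g(E,\delta,R)$ is attained by a genuine finite Blaschke product with zeros in $E$. The key elementary estimate is that for a point $z$ with $|z|\le R$ and a zero $z_j\in E\subset\{|z|\le\rho\}$, the pseudohyperbolic factor satisfies $\left|\tfrac{z-z_j}{1-\overline{z}_jz}\right|\le\tfrac{R+\rho}{1+R\rho}<1$, so any competitor $B$ of degree $n$ obeys $\sup_{|z|\le R}|B|\le\bigl(\tfrac{R+\rho}{1+R\rho}\bigr)^{n}$. Hence near-extremal competitors have bounded degree, and a normal-families argument (the zeros ranging over the compact set $E$, which is closed, with the constraint $|B|\le\delta$ on $E$ preserved under uniform limits) produces an extremal product $B_\delta$ with $\sup_{|z|\le R}|B_\delta|=g(E,\delta,R)$ and $|B_\delta|\le\delta$ on $E$. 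Since $B_\delta$ is a finite Blaschke product it has modulus $1$ on $\partial\mathbb{D}$, so $B_\delta\in\mathcal{A}^p$ for every $p$ and is an admissible competitor in \eqref{CeRdefinition}.

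Finally I would take the single product $B:=B_{\varphi(\epsilon)}$ and verify both inequalities of the Corollary. The upper bound is immediate from the displayed sandwich, because $g(E,\varphi(\epsilon),R)=\sup_{|z|\le R}|B|$, giving $C_p(E,\epsilon,R)\le K\,\sup_{|z|\le R}|B|^{\alpha}$. For the lower bound, $B$ satisfies $|B|\le\varphi(\epsilon)\le\epsilon$ on $E$, so it is admissible at error level $\epsilon$ in \eqref{CeRdefinition} and therefore $\sup_{|z|\le R}|B|\le C_p(E,\epsilon,R)$. The hard part, and the true content of the reduction, is precisely the inequality $\varphi(\epsilon)\le\epsilon$ together with the requirement that the final constants depend only on $p$ and $R$ and not on $E$: one must know that the effective error $\varphi(\epsilon)$ produced in the proof of Theorem~\ref{TheoremMain} does not exceed $\epsilon$, and failing that supply a uniform comparison between $g(E,\varphi(\epsilon),R)$ and $g(E,\epsilon,R)$ — for which the elementary monotonicity of $g(E,\cdot,R)$ and the squaring inequality $g(E,\delta^{2},R)\ge g(E,\delta,R)^{2}$, combined with a two-constants estimate on $\{|z|\le R\}$, are the natural tools. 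The same degree bound as above is what lets $\epsilon_0$ be chosen uniformly in the compact set $E$, which is the last point to check.
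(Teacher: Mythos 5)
Your overall frame — take $q\equiv1$ for compact $E$, apply Theorem~\ref{TheoremMain}, and convert the quantity $g$ into a single finite Blaschke product (your attainment argument via the degree bound $\sup_{|z|\le R}|B|\le\bigl(\tfrac{R+\rho}{1+R\rho}\bigr)^n$ and normal families is sound, and the paper gets the same thing for free from the Fekete points $Z_{n_0}$ constructed in the proof of Theorem~\ref{TheoremMain}) — matches the paper up to the last step. But the last step, which you yourself flag as ``the hard part,'' is a genuine gap, and the inequality you hope for is false. By the construction of $\varphi$ in Section~\ref{SectionSetFunctions}, equation~(\ref{EquationOfPhi}) says $\varphi(\epsilon)=h(x)$ where $\epsilon=h(x)/(x+1)$, hence $\varphi(\epsilon)=(x+1)\epsilon>\epsilon$, and indeed $\varphi(\epsilon)/\epsilon=x+1\to\infty$ as $\epsilon\to0$. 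Consequently your product $B=B_{\varphi(\epsilon)}$ satisfies only $|B|\le\varphi(\epsilon)$ on $E$; it is not an admissible competitor in (\ref{CeRdefinition}) at level $\epsilon$, so the lower bound $\sup_{|z|\le R}|B|\le C_p(E,\epsilon,R)$ does not follow from your argument.

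What closes the gap — and what the paper's short proof actually consists of — is a \emph{rate} for $\varphi$, obtained by estimating the set functions $M_n(E)$: for compact $E\subset\{|z|\le r\}$ with $q\equiv1$, Proposition~\ref{ConvergenceOfTildeV_TildeMProp} gives $M_n(E)\le V_n(E)^{1/n}\le\eta^{(n-1)/2}$ with $\eta=\tfrac{2r}{1+r^2}<1$, i.e.\ geometric decay. Taking $h$ geometric in (\ref{EquationOfPhi}) then forces $x\asymp\log(1/\epsilon)$, so $\varphi(\epsilon)=(x+1)\epsilon\le\epsilon^{1/2}$ for all small $\epsilon$ (this is the paper's statement $\lim_{\epsilon\to0}\log\epsilon/\log\varphi(\epsilon)=1$). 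With this input, your own fallback tools finish the proof: monotonicity of $g(E,\cdot,R)$ and your squaring inequality $g(E,\delta^2,R)\ge g(E,\delta,R)^2$ give $g(E,\varphi(\epsilon),R)\le g(E,\epsilon^{1/2},R)\le g(E,\epsilon,R)^{1/2}$, whence Theorem~\ref{TheoremMain} yields $g(E,\epsilon,R)\le C_p(E,\epsilon,R)\le K\,g(E,\epsilon,R)^{\alpha/2}$, and the extremal product at level $\epsilon$ (not at level $\varphi(\epsilon)$) satisfies both required inequalities with exponent $\alpha/2$. Without the geometric decay of $M_n(E)$ nothing of this sort can work: $\varphi$ is otherwise an abstract function depending on $E$, and for general $E$ with $m(E_0)=0$ it may tend to zero arbitrarily slowly relative to $\epsilon$, so no comparison between $g(E,\varphi(\epsilon),R)$ and $g(E,\epsilon,R)$ with constants independent of $\epsilon$ is available. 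So the missing idea is precisely the quantitative use of compactness through $M_n(E)$, which is the entire content of the paper's proof. (Your worry about uniformity of $\epsilon_0$ in $E$ is reasonable but is an imprecision in the paper itself: in Theorem~\ref{TheoremMain}, $\epsilon_0=h(1)/2$ does depend on $E$.)
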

\begin{corollary}
If $E$ is contained in a finite union of Stolz angles then there exist constants $K_{p}$, $\sigma >0$ and $\epsilon _0>0$ depending only on $R$ and the
vertices of these Stolz angles, and there exists a constant $\alpha >0$ depending only on $R$ such that for all $0<\epsilon <\epsilon _0$, there exists a
finite Blaschke product $B(z)$ with all zeros in $E$ such that
\begin{eqnarray*}
\frac{1}{K}\sup _{|z|\leq R}|B(z)|\leq C_p(E,\epsilon ,R)\leq K\times \sup _{|z|\leq R}|B(z)|^{\alpha \sigma}.
\end{eqnarray*}\label{CorollaryCaseEIsInStolzAngles}
\end{corollary}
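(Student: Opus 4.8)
The plan is to derive the corollary from Theorem~\ref{TheoremMain} by evaluating $g(E,\epsilon,R,q)$, for sets contained in a finite union of Stolz angles, in terms of genuine finite Blaschke products. First I would verify the hypothesis $m(E_0)=0$: if $E\subseteq\bigcup_{i=1}^m\Omega_{\sigma_i}(\zeta_i)$, then any sequence in $E$ tending to $\partial\mathbb D$ must, along a subsequence, remain in a single angle $\Omega_{\sigma_i}(\zeta_i)$ and hence converge to its vertex $\zeta_i$; thus $E_0\subseteq\{\zeta_1,\dots,\zeta_m\}$ is finite and $m(E_0)=0$, so Theorem~\ref{TheoremMain} applies, with $\sigma:=\max_i\sigma_i$.

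Instead of the singular inner function furnished by Theorem~\ref{TheoremHayman}, I would choose a weight with only polynomial boundary decay, tailored to the geometry: put $q(z)=c\prod_{i=1}^m\left(\frac{\zeta_i-z}{2}\right)^{\beta}$ for a fixed $\beta>0$, with $c>0$ chosen so that $\|q\|_\infty=1$. Since $\zeta_i-z$ is zero-free on $\mathbb D$, this $q$ is holomorphic with $0<|q(z)|<1$, and because $|\zeta_i-\zeta|\le\sigma_i(1-|\zeta|)$ on $\Omega_{\sigma_i}(\zeta_i)$ we get $q(\zeta)\to0$ as $\zeta\to\partial\mathbb D$ inside $E$; hence $q$ is admissible. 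The reason for this choice is that on each angle $|q(\zeta)|$ is comparable to a power of $1-|\zeta|$, the comparison constant being controlled by the aperture, whereas a singular inner weight would decay like $\exp(-c/(1-|\zeta|))$ and could not be matched against finite Blaschke products.

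The core is a two-sided comparison between $g(E,\delta,R,q)$ and $\sup_{|z|\le R}|B(z)|$ over finite Blaschke products $B$ with zeros in $E$. For the lower estimate, note $m_R:=\min_{|z|\le R}|q(z)|>0$, so for any admissible tuple $Z_n$, evaluating $|B_q(Z_n,\cdot)|$ at a maximiser of $|B(Z_n,\cdot)|$ on $\{|z|\le R\}$ gives $\sup_{|z|\le R}|B_q(Z_n,z)|\ge m_R\sup_{|z|\le R}|B(Z_n,z)|$; taking $Z_n$ nearly extremal and $B:=B(Z_n,\cdot)$, the left inequality of Theorem~\ref{TheoremMain} yields $\sup_{|z|\le R}|B|\le m_R^{-1}g(E,\epsilon,R,q)\le m_R^{-1}C_p(E,\epsilon,R)$, the lower bound with $K=m_R^{-1}$. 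For the upper estimate I would start from $C_p(E,\epsilon,R)\le K|q(0)|^{-\alpha}g(E,\varphi(\epsilon),R,q)^{\alpha}$ and convert the weighted extremal tuple: the constraint $|q(\zeta)B(Z_n,\zeta)|\le\varphi(\epsilon)$ on $E$, read through the Stolz comparison of $|q|$ with finite Blaschke factors, produces an ordinary Blaschke product $B$ with zeros in $E$ for which $g(E,\varphi(\epsilon),R,q)\le C\bigl(\sup_{|z|\le R}|B(z)|\bigr)^{\sigma}$, whence $C_p(E,\epsilon,R)\le K'\bigl(\sup_{|z|\le R}|B|\bigr)^{\alpha\sigma}$. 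The extra power $\sigma$ is precisely the price of the aperture in passing from the weight to finite Blaschke products, and it degrades the compact-set exponent $\alpha$ of Corollary~\ref{CorollaryCaseEIsCompact} (where $q\equiv1$) to $\alpha\sigma$.

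The main obstacle is producing a single Blaschke product $B$ serving both inequalities: the lower estimate naturally lives at threshold $\epsilon$, while the upper estimate from Theorem~\ref{TheoremMain} lives at a different threshold $\varphi(\epsilon)$. I expect to reconcile these by showing that for sets in a finite union of Stolz angles $\delta\mapsto g(E,\delta,R,q)$ grows like a fixed power of $\delta$, so that the two thresholds are polynomially comparable and can be absorbed into the constants and the exponent. Carrying out the Stolz comparison of the polynomial weight against finite Blaschke products, and the bookkeeping of how the aperture enters the exponent, is where the genuine work lies; the case of several vertices then reduces to the one-vertex case via the product form of $q$ and localisation of $E$ to the individual angles.
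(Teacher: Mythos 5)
Your setup matches the paper's: the choice of a polynomial weight $q$ vanishing at the vertices (the paper takes $q(z)=(z-a_1)\cdots(z-a_m)$ where $\{a_i\}=\overline E\cap\partial\mathbb{D}$), the observation that the Stolz geometry makes $|q|$ comparable to a power of $1-|z|$ on $E$, and the conversion of the lower bound via $m_R=\min_{|z|\leq R}|q(z)|>0$ are all correct, and they are exactly the (partly implicit) soft ingredients of the paper's argument. The problem is that the step you defer --- ``showing that $\delta\mapsto g(E,\delta,R,q)$ grows like a fixed power of $\delta$, so that the thresholds $\epsilon$ and $\varphi(\epsilon)$ are polynomially comparable'' --- is not a piece of bookkeeping to be absorbed into constants; it is the entire quantitative content of the paper's proof, and your proposal contains no argument for it. The paper never studies the modulus of continuity of $g$ in $\delta$. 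Instead it controls the threshold function $\varphi$ of Theorem \ref{TheoremMain} through the set functions of Section \ref{SectionSetFunctions}: by the definition (\ref{EquationOfPhi}), one gets $\varphi(\epsilon)\lesssim\epsilon^{\sigma/(1+\sigma)}$ as soon as $M_n(E)\leq Cn^{-\sigma}$ (this is Corollary \ref{CorollaryMnIsSmall}), and the Stolz-angle corollary is then reduced to Proposition \ref{CorrectnessOfGeneralizationProp}, which asserts precisely this decay of $M_n(E)$.

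That proposition is where the genuine work lies, and nothing in your outline substitutes for it. The paper's argument runs: (i) $M_n(E)\leq V_n(E)^{1/n}$ for the extremal (Fekete-type) configurations, proved at the end of Proposition \ref{ConvergenceOfTildeV_TildeMProp}; (ii) for a single angle with vertex $1$ and $q(z)=z-1$, the Stolz condition gives $|q(z)|>\delta\Rightarrow|z|<1-c\delta$, whence $V_n^{1/n}\leq C\max\{\delta^{1/3},\eta_\delta^{n/9}\}$ with $\eta_\delta=2r_\delta/(1+r_\delta^2)$, and the explicit choice $\delta=n^{-3\sigma}$ with $\sigma\in(0,1/6)$ yields $V_n^{1/n}\leq Cn^{-\sigma}$; (iii) a union argument (taking $q=q_1q_2$ and using $x^x(1-x)^{1-x}\geq 1/2$) halves the exponent and handles finitely many angles. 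Your proposal never introduces $V_n$ or $M_n$, so it has no mechanism that produces the exponent $\sigma$, no number of zeros $n$ attached to a given $\epsilon$, and consequently no way to reconcile the two thresholds, i.e.\ no way to exhibit a single finite Blaschke product serving both inequalities. Whether one could instead prove directly that $g(E,\delta,R,q)$ obeys a two-sided power law in $\delta$ is unclear and, in any case, unproven here; as written, your argument establishes only the easy lower estimate $\frac{1}{K}\sup_{|z|\leq R}|B(z)|\leq C_p(E,\epsilon,R)$ and leaves the upper estimate --- hence the corollary --- open.
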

These results are in fact corollaries of a more general result (see Corollary \ref{CorollaryMnIsSmall}) which needs only the condition that
$M_n(E)\lesssim n^{-\sigma}$ for some constants $\sigma >0$ and all $n\geq 0$.

Let us remark some features of the set functions $M_n(E)$ in Step 2 above. They are analogous to the set functions defined in (weighted) potential theory
for subsets of $\mathbb{C}$ (however there are important differences, see Section \ref{SectionSetFunctions} for more detailed). In fact in case $E$ is
compact in $\mathbb{D}$, we choose $q(z)=1$, and the function $M_n(E)$ is similar to the classical potential theory for the unit disk (see for example
\cite{Tsuji}). In a next paper of the second author, it is shown that by choosing a suitable function $q(z)$ these set functions can be defined for all
subsets $E$ of $\mathbb{D}$ (not only sets $E$ with $m(E_0)=0$ as dealt with in this paper), which give a uniform estimate to a quantity analogous to
$C_p(E,\epsilon ,R)$.

Our approach using interpolation by finite Blaschke products also give a simple and constructive proof to the following result by Danikas\cite{dan} and
Hayman\cite{hay} (see also \cite{lk} for a related result)
\begin{theorem}
Assume that $E$ is a non-Blaschke sequence $(z_j)$. Then there exists a sequence of positive numbers $(\eta _j)$ with
the property that
\begin{eqnarray*}
\lim _{j\rightarrow\infty}\eta _j=0,
\end{eqnarray*}
such that if $f$ is a non-zero bounded analytic funtion on $U$ then
\begin{eqnarray*}
\limsup _{j\rightarrow \infty}\frac{|f(z_j)|}{\eta _j}=\infty .
\end{eqnarray*}
\label{TheoremDanikasAndHayman}
\end{theorem}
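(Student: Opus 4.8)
The plan is to argue by contradiction, testing the hypothetical smallness of $f$ against a fixed positive measure carried by the sequence. Suppose a candidate $\eta_j\downarrow 0$ has been produced and that, for some non-zero $f$ with $\|f\|_{\infty}\le 1$, the conclusion fails, so that $|f(z_j)|\le C\eta_j$ for all $j\ge N$. I would fix weights $c_j>0$, set $\mu=\sum_j c_j\delta_{z_j}$, and manufacture $\eta_j$ from the $c_j$ by prescribing the products $c_j\log(1/\eta_j)$. The whole scheme rests on meeting two competing demands at once: that $\sum_j c_j\log(1/\eta_j)=\infty$, while the companion quantity $\int_{\mathbb D}\log(1/|f|)\,d\mu$ stays finite for every non-zero $f$. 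Indeed the failure hypothesis gives $\int_{\mathbb D}\log(1/|f|)\,d\mu\ge\sum_{j\ge N}c_j\log\bigl(1/(C\eta_j)\bigr)=\infty$, so a finite upper bound for the same integral is the contradiction we want, and it forces $\limsup_j |f(z_j)|/\eta_j=\infty$.

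To bound $\int_{\mathbb D}\log(1/|f|)\,d\mu$ from above I would use the canonical factorization $f=B\,S\,O$ and the Poisson--Jensen representation
\[
\log\frac{1}{|f(z)|}=\sum_k g(z,a_k)+\int_{\partial\mathbb D}P(z,\zeta)\,d\sigma(\zeta)+\int_{\partial\mathbb D}P(z,\zeta)\log\frac{1}{|f^{\ast}(\zeta)|}\,\frac{|d\zeta|}{2\pi},
\]
where $g(\cdot,a_k)$ are the Green functions of the zeros $(a_k)$, $\sigma\ge 0$ is the singular measure, and $P$ is the Poisson kernel; every term on the right is non-negative. Integrating against $\mu$ and applying Tonelli, the singular and outer contributions become $\int_{\partial\mathbb D}(S\mu)\,d\sigma$ and $\int_{\partial\mathbb D}(S\mu)\log(1/|f^{\ast}|)\,\frac{|d\zeta|}{2\pi}$, where $S\mu(\zeta)=\int_{\mathbb D}P(\zeta,w)\,d\mu(w)$ is the balayage of $\mu$. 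Both are finite as soon as $S\mu\in L^{\infty}(\partial\mathbb D)$, since $\sigma$ is finite and $\log(1/|f^{\ast}|)\in L^1(\partial\mathbb D)$; and a one-line computation shows that the explicit choice $c_j=2^{-j}(1-|z_j|)$ already gives $\|S\mu\|_{\infty}\le 2$, using only $|\zeta-z_j|\ge 1-|z_j|$. These two pieces are therefore harmless.

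The main difficulty is the first, Blaschke, term $\int_{\mathbb D}\sum_k g(z,a_k)\,d\mu=\sum_k\sum_j c_j\,g(z_j,a_k)$, i.e.\ the effect of the zeros of $f$. It is genuinely dangerous: a single zero placed pseudohyperbolically close to some $z_{j_0}$ makes $\log(1/|B(z_{j_0})|)$ arbitrarily large, and a zero sitting exactly at a $z_j$ makes the integral infinite. The saving mechanism is that the zeros of a non-zero $f\in H^{\infty}$ form a Blaschke sequence, $\sum_k(1-|a_k|)<\infty$, while the $z_j$ are non-Blaschke. Using $g(z,a)\le \tfrac12(1-|a|^2)(1-|z|^2)/|z-a|^2$ one reduces the Blaschke term to $\sum_k (1-|a_k|^2)\,W(a_k)$ with $W(a)=\sum_j c_j(1-|z_j|^2)/|z-a|^2$, and the task becomes showing that the finite Blaschke mass of the zeros cannot accumulate against the infinite mass $\sum_j(1-|z_j|)=\infty$. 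I expect this to be where the real work lies. The indices $j$ at which $|B(z_j)|$ is itself small form a subset $J_B$ that must be separated off; on its complement the smallness of $|f(z_j)|$ comes from the zero-free factor $h=SO$, and there the previous paragraph applies verbatim and closes the argument. The serious issue is that $J_B$ need not be negligible, and controlling it amounts to a quantitative form of the statement that a Blaschke product cannot be small along a non-Blaschke sequence faster than a prescribed rate.

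Compounding this, the sequence $\eta_j$ must be adapted to the geometry of $(z_j)$, and producing a single universal choice is the crux. For a sequence clustering at an interior point of $\mathbb D$ (still non-Blaschke) the identity theorem shows that $|f(z_j)|$ cannot decay faster than a fixed power of the distance to that point, so $\eta_j$ must be driven to zero faster than every such power; here a geometry-blind choice such as $\eta_j=e^{-2^{j}}$ provably fails. For a sequence tending to $\partial\mathbb D$ the relevant control is instead the balayage bound above, which rewards slower decay. The hard part, then, is to define one $\eta_j$ interpolating between these regimes so that $\sum_j c_j\log(1/\eta_j)$ stays divergent even after excising the Blaschke index set $J_B$, while $\int_{\mathbb D}\log(1/|f|)\,d\mu$ remains finite for every non-zero $f$. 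Once this simultaneous balancing is achieved, the contradiction of the first paragraph fires and the theorem follows.
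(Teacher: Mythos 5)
There is a genuine gap, and you identify it yourself: the sequence $\eta_j$ is never actually constructed, and the step you defer to the end --- controlling the index set $J_B$ on which the Blaschke factor of $f$ is small --- is not a technical remainder but is essentially the theorem itself. After factoring $f=B\,S\,O$, your balayage bound handles only the zero-free factor $h=SO$; what is left to prove is precisely that no non-zero Blaschke product $B$ can satisfy $|B(z_j)|\le C\eta_j$ for all large $j$, i.e.\ the statement of the theorem for the hardest class of functions, so the reduction is circular. Moreover, the contradiction scheme as set up cannot fire at all: it needs $\int_{\mathbb{D}}\log(1/|f|)\,d\mu<\infty$ for every non-zero bounded $f$, and this is false --- not only when $f$ vanishes at some $z_j$, but already when $f$ has a zero at a cluster point of the sequence. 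For instance, if $|z_j-z_0|=e^{-4^j}$ with $z_0$ interior (a non-Blaschke sequence), then for $f(z)=(z-z_0)/2$ the Green-function term is $\sum_j c_j\log\frac{1}{|z_j-z_0|}\approx\sum_j 2^{-j}4^j=\infty$ with your choice $c_j=2^{-j}(1-|z_j|)$, even though $f$ is zero-free on $E$. There is also an internal tension in that choice of weights: divergence of $\sum_j c_j\log(1/\eta_j)$ forces $\log(1/\eta_j)\ge 2^j/\bigl(j^2(1-|z_j|)\bigr)$ along a subsequence, i.e.\ exactly the kind of geometry-blind super-exponential decay of $\eta_j$ that your own fourth paragraph shows provably fails.

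The paper's proof supplies precisely the missing construction, and does so in a way that never sees the zeros of $f$. It partitions the sequence into consecutive blocks $Z_{(k)}=\{z_{n_k},\dots,z_{n_{k+1}-1}\}$ with $\sum_{j=n_k}^{n_{k+1}-1}(1-|z_j|)\ge k$ (possible since the sequence is non-Blaschke), and defines $\eta_j=|B(Z_{(k),j},z_j)|/m_k$ for $j$ in the $k$-th block, where $B(Z_{(k),j},\cdot)$ is the finite Blaschke product over the block with $z_j$ deleted and $m_k$ is the block length. The interpolation inequality of Theorem \ref{TheoremRecovery} holds for every bounded analytic $f$, regardless of where its zeros lie, and its coefficients contain the factors $1/B(Z_{(k),j},z_j)$ that exactly cancel the $\eta_j$ appearing in the hypothesis $|f(z_j)|\le C\eta_j$; combined with Proposition \ref{BlaschkeProductsEstimationsProp} this gives $|f(z)|\le C'\exp\{(-k+1)/4\}$ on $|z|\le 1/2$ for every $k$, hence $f\equiv 0$. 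The adaptation of $\eta_j$ to the geometry of $(z_j)$, which you correctly flag as the crux, is achieved by this block construction; if you wish to salvage the potential-theoretic route, this is the ingredient you would have to reinvent rather than postulate.
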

This paper is organized as follows. In Section \ref{SectionInterpolationByBalschkeProducts} we recall the formula for interpolation by finite Blaschke
product, some properties of finite Blaschke product, and give a proof of Theorem \ref{TheoremDanikasAndHayman}. In Section \ref{SectionSetFunctions} we
define set functions $M_n(E)$ and other set functions, and the function $\varphi (\epsilon )$ used in Theorem \ref{TheoremMain}. We prove Theorem
\ref{TheoremMain} in Section \ref{SectionProofOfMainTheorem}. We prove Corollaries \ref{CorollaryCaseEIsCompact} and \ref{CorollaryCaseEIsInStolzAngles}
and give some other examples in Section \ref{SectionCorollaryAndExample}. In Section \ref{SectionOnePointEstimate} we prove the similar results for the
one-point estimates of $C_p(E,\epsilon ,0)$. In the Appendix we give the proof of (\ref{EquationEstimateCaseE0HasPositiveMeasure}) for the case when
$m(E_0)>0$.

{\bf Acknowledgment} The second author would like to thank Professor Norman Levenberg for showing us the analog between our set functions in Section 3
and those of the (weighted) potential theory for a subset of $\mathbb{C}$, and for suggesting about using harmonic measures in proving
(\ref{EquationEstimateCaseE0HasPositiveMeasure}). He also would like to thank Professor Yuril Lyubarskii for helpful comments, in particular for showing
us the proof of (\ref{EquationEstimateCaseE0HasPositiveMeasure}) that we include in the Appendix.
\section{Interpolation by finite Blashcke products}\label{SectionInterpolationByBalschkeProducts}
We use the notations in Definition \ref{DefinitionWeightedBlaschkeProducts}.

The following result give an interpolation using Blaschke products for functions in $H^p$:
\begin{theorem}
If $Z_n$ = $(z_1, z_2, \ldots, z_n)$ is a sequence of $n$ distinct points in $\mathbb{D}$ then, for all $f$ in $H^p$ and $z$ in $\mathbb{D}$, the
following inequality holds:
\begin{equation}
\left|f(z) - \sum_{k = 1}^n c_k(Z_n,z)f(z_k)\right| \leq \frac{\|f\|_p}{(1 - |z|^2)^{\frac{1}{p}}}|B(Z_n,z)|,
\label{rtieqn}
\end{equation}
where
\begin{equation}
c_{p,k}(Z_n,z) = \frac{1 - |z_k|^2}{1-\overline{z_k}z}\left(\frac{1 - \overline{z}z_k}{1 - |z|^2}\right)^{\frac{2 -
p}{p}}\frac{B(Z_{n,k},z)}{B(Z_{n,k},z_k)}. \label{rtckexpr}
\end{equation}
\label{TheoremRecovery}
\end{theorem}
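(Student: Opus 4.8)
The plan rests on two standard facts about $H^p$: the conformal invariance encoded by the isometry $U_a f = (f\circ\varphi_a)\,(\varphi_a')^{1/p}$, where $\varphi_a(\zeta)=\frac{a-\zeta}{1-\overline a\zeta}$ is the involutive automorphism exchanging $0$ and $a$ (so that $\|U_af\|_p=\|f\|_p$, $\varphi_a'$ being nonvanishing on $\mathbb{D}$ and hence admitting a holomorphic $1/p$-th power), and the elementary bound $|g(0)|\le\|g\|_1\le\|g\|_p$. I would first dispose of the degenerate cases: if $z=z_j$ then the left side vanishes by the interpolation property $c_{p,k}(Z_n,z_m)=\delta_{km}$ and the right side vanishes since $B(Z_n,z_j)=0$, so I may assume $z\notin Z_n$ and the $z_k$ distinct, whence $B(Z_n,\zeta)$ has only simple zeros.

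Step 1 (the estimate at the origin). I would first prove the case $z=0$ by contour integration. For $g\in H^p$ consider
\[
\frac{1}{2\pi i}\oint_{|\zeta|=1}\frac{g(\zeta)}{\zeta}\,\frac{B(Z_n,0)}{B(Z_n,\zeta)}\,d\zeta .
\]
Since $|B(Z_n,\zeta)|=1$ on the circle, the integrand has modulus $|g(\zeta)|\,|B(Z_n,0)|$, so the integral is bounded by $|B(Z_n,0)|\,\|g\|_1\le |B(Z_n,0)|\,\|g\|_p$. Evaluating it by residues at $\zeta=0$ and at the simple poles $z_k$ gives $g(0)+\sum_k \frac{B(Z_n,0)}{z_kB'(Z_n,z_k)}\,g(z_k)$; using $B(Z_n,0)=-z_k\,B(Z_{n,k},0)$ and $B'(Z_n,z_k)=B(Z_{n,k},z_k)/(1-|z_k|^2)$ rewrites the $k$-th coefficient as $-c_{p,k}(Z_n,0)$ with $c_{p,k}(Z_n,0)=(1-|z_k|^2)B(Z_{n,k},0)/B(Z_{n,k},z_k)$, which is exactly (\ref{rtckexpr}) at $z=0$ (note it is $p$-independent there). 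Hence the integral equals $g(0)-\sum_k c_{p,k}(Z_n,0)g(z_k)$, and the displayed bound is the theorem for $z=0$. To legitimize the contour integral for a general $H^p$ function I would first run it for the dilations $g_r(\zeta)=g(r\zeta)$, which are holomorphic near $\overline{\mathbb{D}}$, and then let $r\uparrow1$ using $\|g_r\|_p\uparrow\|g\|_p$ and pointwise convergence.

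Step 2 (transport to a general point). For arbitrary $z\in\mathbb{D}$ I would set $a=z$ and $G=U_zf$, so $\|G\|_p=\|f\|_p$, while (with a fixed branch of $(\varphi_z')^{1/p}$) $G(0)=f(z)(1-|z|^2)^{1/p}$ and $G(\beta_k)=f(z_k)\,(\varphi_z'(\beta_k))^{1/p}$, where $\beta_k:=\varphi_z(z_k)$. Because $1-\overline z z_k=\overline{1-\overline{z_k}z}$ one has $|\beta_k|=\big|\tfrac{z-z_k}{1-\overline{z_k}z}\big|$, hence $\prod_k|\beta_k|=|B(Z_n,z)|$. Applying the origin estimate of Step 1 to $G$ with the nodes $\beta_k$, namely $\big|G(0)-\sum_k c_{p,k}(\{\beta_j\},0)G(\beta_k)\big|\le\|G\|_p\prod_k|\beta_k|$, and substituting these identities yields the asserted inequality after dividing through by $(1-|z|^2)^{1/p}$.

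Step 3 (identifying the coefficients) and the main obstacle. It remains to check that the transported coefficient $c_{p,k}(\{\beta_j\},0)\,(\varphi_z'(\beta_k))^{1/p}(1-|z|^2)^{-1/p}$ equals $c_{p,k}(Z_n,z)$; this is where the $p$-dependence of (\ref{rtckexpr}) is born. Using $\varphi_z'(\beta_k)=-(1-|z|^2)/(1-\overline z\beta_k)^2$ together with the identity $1-\overline z\beta_k=(1-|z|^2)/(1-\overline z z_k)$, the $1/p$-powers collapse to $(1-\overline z z_k)^{2/p}(1-|z|^2)^{-2/p}$; combining this with $1-|\beta_k|^2=(1-|z|^2)(1-|z_k|^2)/|1-\overline z z_k|^2$ and the Blaschke ratios hidden in $c_{p,k}(\{\beta_j\},0)$ reproduces the factor $\frac{1-|z_k|^2}{1-\overline{z_k}z}\big(\frac{1-\overline z z_k}{1-|z|^2}\big)^{(2-p)/p}\frac{B(Z_{n,k},z)}{B(Z_{n,k},z_k)}$. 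The hard part is precisely this bookkeeping: one must keep the branches of $(\varphi_z')^{1/p}$ and of $(\cdot)^{(2-p)/p}$ mutually consistent so that the accumulated phases cancel and the transported coefficients land exactly on the principal-branch expression (\ref{rtckexpr}); a secondary technical point is the $H^1$/dilation justification of the contour integral in Step 1. I note that the architecture is forced: the residue estimate is sharp only at the origin (where $|\zeta|=1$ makes $1/|\zeta-0|\equiv1$), and it is the conformal isometry that transports this sharpness to a general $z$ with the correct power $(1-|z|^2)^{-1/p}$.
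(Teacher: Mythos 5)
Your proposal is correct, but be aware that the paper contains no proof of Theorem \ref{TheoremRecovery} to compare against: it is stated and the reader is referred to \cite{osi} and \cite{TLNT}. The proofs in those sources run a single residue computation at the point $z$ itself, against a $p$-dependent kernel built from $\left((1-|z|^2)/(1-\overline z\zeta)^2\right)^{1/p}$ --- which is precisely the factor your isometry $U_z$ introduces --- so your ``origin case plus conformal transport'' is the same computation repackaged. What your version buys is that all the analysis (residues, $|B(Z_n,\zeta)|=1$ on the circle, $\|\cdot\|_1\le\|\cdot\|_p$, the dilation limit $r\uparrow 1$) happens at the origin, where no fractional powers occur, and the entire $p$-dependence of (\ref{rtckexpr}) is quarantined in the purely algebraic Step 3; it also degenerates correctly at the endpoints ($p=2$: exponent $0$; $p=\infty$: $U_z$ is plain composition).

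Concerning the obstacle you flag in Step 3: it does close, with principal branches throughout, and the one identity you left unverified is
\begin{equation*}
\frac{B(\{\beta_j\}_{j\neq k},0)}{B(\{\beta_j\}_{j\neq k},\beta_k)}=\frac{B(Z_{n,k},z)}{B(Z_{n,k},z_k)},
\end{equation*}
i.e.\ that no stray unimodular factors survive the transport. It follows from the exact (not merely modulus) M\"obius identities
\begin{equation*}
\varphi_z(u)-\varphi_z(v)=\frac{(v-u)(1-|z|^2)}{(1-\overline z u)(1-\overline z v)},\qquad
1-\overline{\varphi_z(v)}\,\varphi_z(u)=\frac{(1-|z|^2)(1-u\overline v)}{(1-z\overline v)(1-\overline z u)},
\end{equation*}
which give $\frac{\beta_k-\beta_j}{1-\overline{\beta_j}\beta_k}=\frac{(z_j-z_k)(1-\overline{z_j}z)}{(1-\overline z z_j)(1-\overline{z_j}z_k)}$ with $u=z_k$, $v=z_j$; dividing this product over $j\neq k$ into $\prod_{j\neq k}(-\beta_j)=\prod_{j\neq k}(z_j-z)/(1-\overline z z_j)$ cancels every factor $1-\overline z z_j$ and leaves $\prod_{j\neq k}\frac{(z_j-z)(1-\overline{z_j}z_k)}{(z_j-z_k)(1-\overline{z_j}z)}$, which is the right-hand side because numerator and denominator of each ratio $(z_j-z)/(z_j-z_k)$ change sign together. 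As for branches: $\Re(1-\overline z\zeta)>0$ on $\overline{\mathbb{D}}$, so the ratio $(\varphi_z'(\beta_k))^{1/p}/(\varphi_z'(0))^{1/p}$ is branch-independent and equals the principal value of $\left((1-\overline z z_k)/(1-|z|^2)\right)^{2/p}$, while the ambiguous phase of $(\varphi_z'(0))^{1/p}=(-(1-|z|^2))^{1/p}$ never enters, since only its modulus $(1-|z|^2)^{1/p}$ appears when you divide the origin estimate by $|G(0)/f(z)|$; finally $w^{(2-p)/p}=w^{2/p}w^{-1}$ holds for any fixed branch, so the transported coefficient lands exactly on (\ref{rtckexpr}). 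With these two points written out, your Steps 1--3 are a complete and correct proof.
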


The reader is referred to \cite{osi} or \cite{TLNT} for proof of this Theorem.

We need some estimates of $B(Z_n, z)$ and $B(Z_{n,k}, z)$, whose proofs are straightforward.

\begin{proposition}
\begin{align*}
|B(Z_n, z)| &\leq \exp\left(-\frac{1 - |z|^2}{4}\sum_{j = 1}^n (1 - |z_j|)\right),\\
|B_k(Z_n, z)| &\leq 2\exp\left(-\frac{1 - |z|^2}{4}\sum_{j = 1}^n (1 - |z_j|)\right),
\end{align*}
for $z$ in $\overline{\mathbb{D}} $ and $Z_n$ in $\overline{\mathbb{D}}^n$. \label{BlaschkeProductsEstimationsProp}
\end{proposition}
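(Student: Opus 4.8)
The plan is to reduce both inequalities to the elementary estimate for a single Blaschke factor and then multiply. First I would record the standard pseudohyperbolic identity: for any $z,w\in\overline{\mathbb{D}}$,
\[
\left|\frac{z-w}{1-\overline{w}z}\right|^2=1-\frac{(1-|z|^2)(1-|w|^2)}{|1-\overline{w}z|^2},
\]
which follows from the algebraic identity $|1-\overline{w}z|^2-|z-w|^2=(1-|z|^2)(1-|w|^2)$, obtained by expanding both squared moduli and cancelling the cross terms.

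Next I would bound the denominator from above. Writing $|1-\overline{w}z|\le 1+|w||z|\le 1+|w|$, and using $1+|w|\le 2$, gives $|1-\overline{w}z|^2\le 2(1+|w|)$; combining this with the factorization $1-|w|^2=(1-|w|)(1+|w|)$ yields the lower bound
\[
\frac{(1-|z|^2)(1-|w|^2)}{|1-\overline{w}z|^2}\ge\frac{(1-|z|^2)(1-|w|)}{2}.
\]
Feeding this into the identity and applying $1-x\le e^{-x}$ gives $\left|\frac{z-w}{1-\overline{w}z}\right|^2\le\exp\!\big(-\tfrac{1}{2}(1-|z|^2)(1-|w|)\big)$, so after taking a square root
\[
\left|\frac{z-w}{1-\overline{w}z}\right|\le\exp\!\Big(-\frac{(1-|z|^2)(1-|w|)}{4}\Big).
\]
Taking the product over $w=z_1,\dots,z_n$ then yields the first inequality at once.

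For the second inequality I would apply the first one to the tuple $Z_{n,k}$, obtaining the bound with $\sum_{j\ne k}(1-|z_j|)$ in the exponent, and then reinsert the missing index. Since $\sum_{j\ne k}(1-|z_j|)=\sum_{j=1}^n(1-|z_j|)-(1-|z_k|)$, and both $1-|z|^2\le 1$ and $1-|z_k|\le 1$ for points of $\overline{\mathbb{D}}$, the discrepancy factor is at most $\exp\!\big(\tfrac{1}{4}(1-|z|^2)(1-|z_k|)\big)\le e^{1/4}\le 2$, which is precisely the role of the constant $2$.

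The computations are routine; the only point demanding care is the constant. The crude denominator bound $|1-\overline{w}z|\le 2$ loses a factor of two in the exponent and yields only $1/8$ after the square root, so one must keep the slightly sharper estimate $|1-\overline{w}z|^2\le 2(1+|w|)$ to recover the stated constant $1/4$. Correspondingly, the factor $2$ in the second estimate is forced exactly because $e^{1/4}<2$ absorbs the single omitted Blaschke factor.
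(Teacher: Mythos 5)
Your proof is correct: the pseudohyperbolic identity, the sharpened denominator bound $|1-\overline{w}z|^2\le 2(1+|w|)$, the inequality $1-x\le e^{-x}$, and the absorption of the omitted factor via $e^{1/4}<2$ all check out, and together they give exactly the stated constants. The paper itself offers no proof (it declares these estimates "straightforward"), and your argument is precisely the standard one being alluded to, so there is nothing to compare beyond noting that you have filled the gap correctly.
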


Now we prove Theorem \ref{TheoremDanikasAndHayman}
\begin{proof}{ (of Theorem \ref{TheoremDanikasAndHayman})}  From properties of $E$, we can choose a sequence of integers $n_1<n_2<...<n_k<...$ such that
\begin{eqnarray*}
\sum _{j=n_k}^{n_{k+1}-1}(1-|z_j|)\geq k.
\end{eqnarray*}

It follows that $m_k=n_{k+1}-n_{k}\geq k$. We denote $Z_{(k)}=\{z_{n_k},z_{n_{k}+1},...,z_{n_{k+1}-1}\}$ (this notation is used only in this proof and
just for the sake of simplicity). Then if $n_k\leq j<n_{k+1}$ we define as before $Z_{(k),j}=\{z_{n_k},z_{n_{k}+1},...,z_{n_{k+1}-1}\}\backslash
\{z_j\}$. We define the sequence $\eta _j$ as follows
\begin{eqnarray*}
\eta _j=\frac{|B(Z_{(k),j},z_j)|}{m(k)},
\end{eqnarray*}
if $n_k\leq j<n_{k+1}$. It is easy to see that $\eta _j\rightarrow 0$ as $j\rightarrow\infty$.

Now assume that $f$ is a bounded analytic function satisfying
\begin{eqnarray*}
\limsup _{j\rightarrow \infty}\frac{|f(z_j)|}{\eta _j}<\infty ,
\end{eqnarray*}
we will show that $f\equiv 0$. Indeed, fixed $z\in U$ with $|z|\leq 1/2$. Applying Theorem \ref{TheoremRecovery} for $Z_{(k)}$ and using Proposition
\ref{BlaschkeProductsEstimationsProp} we have
\begin{eqnarray*}
|f(z)|&\leq&C(1+\sum _{j=n_{k}}^{n_{k+1}-1}\frac{|f(z_j)|}{m_k\eta _j})\max _{j=n_k,...,n_{k+1}-1}|B(Z_{(k),j},z)|\\
&\leq&C\exp \{(-k+1)/4\},
\end{eqnarray*}
for all $k$. So $f(z)=0$ for all $|z|\leq 1/2$. Hence $f\equiv 0$.
\end{proof}
We conclude this section by some more estimates on weighted Blashcke products used later on.
\begin{lemma}
If $R$ is a real number in $(0,1)$, then there exists a positive number $\alpha$ depending only on $R$ such that for all $r$ in $[0,1]$, the inequality
underneath holds,
\begin{equation}
\max\{R^\alpha, r^\alpha\} \geq \frac{R + r}{1 + Rr}. \label{EquationRr}
\end{equation}
\label{LemmaExistenceOfAlpha}
\end{lemma}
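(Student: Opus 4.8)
The plan is to regard the right-hand side as a single function of $r$ and to compare it piecewise against the two candidates $R^\alpha$ and $r^\alpha$. Write $f(r)=\frac{R+r}{1+Rr}$ for $r\in[0,1]$. A one-line computation gives $f'(r)=\frac{1-R^2}{(1+Rr)^2}>0$, so $f$ increases from $f(0)=R$ to $f(1)=1$; in particular $R\le f(r)<1$ for $r\in[0,1)$, so $f(r)$ is positive and $\log f(r)<0$ there. Since $\alpha>0$ and $R<1$, we have $\max\{R^\alpha,r^\alpha\}=R^\alpha$ when $0\le r\le R$ and $=r^\alpha$ when $R\le r\le 1$, so the claim splits into two requirements: (i) $R^\alpha\ge f(r)$ for $r\in[0,R]$, and (ii) $r^\alpha\ge f(r)$ for $r\in[R,1]$.

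For (i), because $f$ is increasing the worst case is $r=R$, so (i) is equivalent to the single inequality $R^\alpha\ge f(R)=\frac{2R}{1+R^2}$. For (ii), the endpoint $r=1$ gives equality ($1\ge 1$), and for $r\in[R,1)$ I would take logarithms: since $0<r<1$ and $0<f(r)<1$, the inequality $r^\alpha\ge f(r)$ is equivalent to $\alpha\le \phi(r)$, where
\[
\phi(r)=\frac{\log f(r)}{\log r}.
\]
Noting that $R^{\phi(R)}=f(R)$, requirement (i) is likewise the condition $\alpha\le\phi(R)$. Thus the whole lemma reduces to finding a positive $\alpha$ with $\alpha\le\phi(r)$ for every $r\in[R,1)$.

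I would then show that $\phi$ extends to a continuous, strictly positive function on the closed interval $[R,1]$. On $(R,1)$ it is manifestly continuous (numerator and denominator are smooth and the denominator is nonzero), and it is positive there because both $\log f(r)$ and $\log r$ are negative. At the left endpoint $\phi(R)=\frac{\log(2R/(1+R^2))}{\log R}>0$, using $(1-R)^2>0$ to see $0<\frac{2R}{1+R^2}<1$. The only delicate point---and the main obstacle---is the removable singularity at $r=1$, where numerator and denominator both vanish; here I apply L'H\^{o}pital's rule to get
\[
\lim_{r\to1^-}\phi(r)=\lim_{r\to1^-}\frac{r\,f'(r)}{f(r)}=\frac{f'(1)}{f(1)}=\frac{1-R}{1+R}>0.
\]
Hence $\phi$ is continuous and strictly positive on the compact set $[R,1]$, so it attains a positive minimum $\alpha_0:=\min_{[R,1]}\phi>0$, which depends only on $R$.

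Finally, choosing $\alpha=\alpha_0$ verifies both requirements: $\alpha_0\le\phi(r)$ for all $r\in[R,1)$ gives (ii), while $\alpha_0\le\phi(R)$ together with the fact that $t\mapsto R^t$ is decreasing gives $R^{\alpha_0}\ge R^{\phi(R)}=f(R)$, which is (i). This establishes the desired inequality for all $r\in[0,1]$ with $\alpha$ depending only on $R$.
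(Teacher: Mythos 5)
Your proof is correct, but it takes a genuinely different route from the paper's. Both arguments begin with the same case split (for $r\le R$ the worst case is $r=R$, giving the single condition $R^\alpha\ge \frac{2R}{1+R^2}$), but they diverge on the harder range $r\in[R,1]$. The paper rearranges $r^\alpha\ge\frac{R+r}{1+Rr}$ into $\frac{r^\alpha-r}{1-r^{\alpha+1}}\ge R$ and proves that the left-hand side is non-decreasing in $r$ by differentiating and invoking Young's (H\"older's) inequality to show the numerator of the derivative, $g(r)=r^{2\alpha}-\alpha r^{\alpha+1}+\alpha r^{\alpha-1}-1$, is nonnegative; this reduces everything to the endpoint $r=R$, where the condition becomes $R^{\alpha-1}+R^{\alpha+1}\ge 2$, achievable for small $\alpha$ since $R^{-1}+R>2$. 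You instead take logarithms, reduce the whole lemma to the uniform bound $\alpha\le\phi(r)=\frac{\log f(r)}{\log r}$ on $[R,1)$, and obtain a positive admissible $\alpha$ by showing $\phi$ extends continuously and positively to the compact interval $[R,1]$, handling the delicate degeneration at $r=1$ (where the original inequality becomes an equality) by L'H\^opital's rule, which gives the limit $\frac{1-R}{1+R}>0$. What the paper's argument buys is an explicit, elementary smallness criterion for $\alpha$ in terms of $R$, at the price of the H\"older trick and a somewhat involved monotonicity computation; what yours buys is brevity and conceptual clarity---uniformity comes for free from compactness---at the price of defining $\alpha$ only implicitly as the minimum of a transcendental function (which is still perfectly adequate, since that minimum depends only on $R$). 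Both proofs are complete and correct.
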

\begin{proof}
First, we consider the case $r$ $\leq$ $R$. We have $\max\{R^\alpha, r^\alpha\}$ $=$ $R^\alpha$ and $$\displaystyle\frac{R + r}{1 + Rr}\leq \frac{2R}{1 +
R^2}.$$ Thus, if this is the case, we must choose $\alpha$ in such a way that $$0<\alpha\leq\frac{\ln(2R) - \ln(1 + R^2)}{\ln R}.$$

Finally, we consider the case $r>R$. The inequality (\ref{EquationRr}) is now equivalent to $$\displaystyle\frac{r^\alpha - r}{1 - r^{\alpha + 1}}\geq
R.$$ We will show that the function $$f(r)=\frac{r^\alpha - r}{1 - r^{\alpha + 1}},~R\leq r\leq 1$$ attains its absolute minimum at $R$. We have $$f'(r)
= \frac{r^{2\alpha} - \alpha r^{\alpha + 1} + \alpha r^{\alpha - 1} - 1}{(1 - r^{\alpha + 1})^2}.$$ Define $$g(r)=r^{2\alpha} - \alpha r^{\alpha + 1} +
\alpha r^{\alpha - 1} - 1,~R\leq r\leq 1,$$ then $$g'(r)=2\alpha r^{2\alpha - 1} - \alpha(1 + \alpha)r^\alpha - \alpha(1 - \alpha)r^{\alpha - 2}.$$ By
Holder inequality
\begin{eqnarray*}
\frac{x^p}{p}+\frac{y^q}{q}\geq xy, ~x,y\geq 0,~\frac{1}{p}+\frac{1}{q}=1,
\end{eqnarray*}
applied to
\begin{eqnarray*}
x&=&r^{1-\alpha},\\
y&=&r^{-(1+\alpha)},\\
p&=&\frac{2}{1+\alpha},\\
q&=&\frac{2}{1-\alpha},
\end{eqnarray*}
one has $$(1 + \alpha)r^{1 - \alpha} + (1 - \alpha)r^{-(1 + \alpha)}\geq 2$$ if $0<r<1$, $0<\alpha <1$. This shows that $g'(r)$ $\leq$ $0$ and thus
$g(r)$ $\geq$ $g(1)$ $=$ $0$. As a consequence, $f(r)$ is non-decreasing, hence when $1\geq r\geq R$ we have
$$f(r)\geq f(R)=\displaystyle\frac{R^\alpha - R}{1 - R^{\alpha + 1}}.$$

Therefore, the proof of the lemma is complete once we can show that for sufficiently small $\alpha$ the inequality $f(R)\geq R$ holds. Indeed, this is
equivalent to $R^{\alpha - 1} + R^{\alpha + 1}\geq 2$. Since $0<R<1$, it follows that $R^{-1} + R^1>2$. Hence, choosing $\alpha$ small enough leads to
the desired result.
\end{proof}
\begin{lemma}
Fix $1>R>0$. Then there exists a constant $\alpha >0$ depending only on $R$ such that for all holomorphic function $q(z)$ and $Z_n=\{z_1,\ldots ,z_n\}\in
\mathbb{D}^n$, and $B_q(Z_n,z)$ the weighted Blaschke product with zeros in $Z_n$ we have
\begin{eqnarray*}
\sup _{|z|\leq R}|B_q(Z_n,z)|^{\alpha}\geq |q(0)|^{\alpha}\prod _{j=1}^n\frac{R+|z_j|}{1+R|z_j|}.
\end{eqnarray*}
In particular, if $q(z)$ is as in Theorem \ref{TheoremMain}, for any $1\leq k\leq n$ we have
\begin{eqnarray*}
\frac{1}{R}|q(0)|^{-\alpha}\sup _{|z|\leq R}|B_q(Z_n,z)|^{\alpha}\geq \sup _{|z|\leq R}|B(Z_{n,k},z)|.
\end{eqnarray*}
\label{LemmaSomeEstimatesForBlaschkeProducts}\end{lemma}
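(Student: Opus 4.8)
The plan is to reduce the whole lemma to the single clean estimate
\[
\sup_{|z|\le R}|B_q(Z_n,z)|\ \ge\ |q(0)|\prod_{j=1}^n\max\{R,|z_j|\},
\]
and then to invoke Lemma \ref{LemmaExistenceOfAlpha} to trade each factor $\max\{R,|z_j|\}$ for $\frac{R+|z_j|}{1+R|z_j|}$ at the cost of the exponent $\alpha$.

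To prove the displayed estimate I would use the sub-mean-value property of $\log|B_q|$. Since $B_q(Z_n,\cdot)$ is holomorphic on $\mathbb{D}$ and $R<1$, the maximum modulus principle gives $\sup_{|z|\le R}|B_q(Z_n,z)|=\max_{|z|=R}|B_q(Z_n,z)|$, and the maximum over the circle dominates the circular average of $\log|B_q|$. Writing $\log|B_q(Z_n,z)|=\log|q(z)|+\sum_j\log|b_{z_j}(z)|$ with $b_{z_j}(z)=\frac{z-z_j}{1-\overline{z}_j z}$, the average splits. For $q$, Jensen's inequality (subharmonicity of $\log|q|$) yields $\frac{1}{2\pi}\int_0^{2\pi}\log|q(Re^{i\theta})|\,d\theta\ge\log|q(0)|$. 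For each Blaschke factor I would use the classical logarithmic mean values $\frac{1}{2\pi}\int_0^{2\pi}\log|Re^{i\theta}-z_j|\,d\theta=\log\max\{R,|z_j|\}$ and $\frac{1}{2\pi}\int_0^{2\pi}\log|1-\overline{z}_j Re^{i\theta}|\,d\theta=0$ (the latter because $|z_j|R<1$), so that $\frac{1}{2\pi}\int_0^{2\pi}\log|b_{z_j}(Re^{i\theta})|\,d\theta=\log\max\{R,|z_j|\}$. Summing over $j$ and exponentiating gives the estimate.

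Next, take $\alpha$ to be the constant furnished by Lemma \ref{LemmaExistenceOfAlpha}. Raising the estimate to the power $\alpha$ and using $(\max\{R,|z_j|\})^\alpha=\max\{R^\alpha,|z_j|^\alpha\}$ gives $\sup_{|z|\le R}|B_q(Z_n,z)|^\alpha\ge|q(0)|^\alpha\prod_j\max\{R^\alpha,|z_j|^\alpha\}$; applying Lemma \ref{LemmaExistenceOfAlpha} factor-by-factor, $\max\{R^\alpha,|z_j|^\alpha\}\ge\frac{R+|z_j|}{1+R|z_j|}$, then yields the first inequality. It is essential here that $\alpha<1$: the bare product $\prod_j\max\{R,|z_j|\}$ is in fact \emph{smaller} than the target product $\prod_j\frac{R+|z_j|}{1+R|z_j|}$, and the sub-unit exponent is exactly what inflates it past the target.

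For the ``in particular'' part, observe first that $r\mapsto\frac{R+r}{1+Rr}$ is increasing on $[0,1]$, so $\frac{R+|z_k|}{1+R|z_k|}\ge R$, and second that the maximum of a product is at most the product of the maxima, giving (with $q\equiv1$) $\sup_{|z|\le R}|B(Z_{n,k},z)|\le\prod_{j\ne k}\frac{R+|z_j|}{1+R|z_j|}$. Combining these with the first inequality,
\[
\sup_{|z|\le R}|B_q(Z_n,z)|^\alpha\ge|q(0)|^\alpha\,\frac{R+|z_k|}{1+R|z_k|}\prod_{j\ne k}\frac{R+|z_j|}{1+R|z_j|}\ge R\,|q(0)|^\alpha\sup_{|z|\le R}|B(Z_{n,k},z)|,
\]
which rearranges to the claim; the hypothesis that $q$ is as in Theorem \ref{TheoremMain} is used only to guarantee $q(0)\ne0$, so that $|q(0)|^{-\alpha}$ makes sense. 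The technical heart of the argument is the circular mean-value computation for the Blaschke factors (and the careful tracking of inequality directions together with the role of $\alpha<1$); everything else is bookkeeping and an appeal to Lemma \ref{LemmaExistenceOfAlpha}.
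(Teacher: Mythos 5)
Your proof is correct and is essentially the paper's own argument: the paper's entire proof consists of invoking Jensen's formula for the bound $\sup_{|z|\leq R}|B_q(Z_n,z)|\geq |q(0)|\prod_{j=1}^n\max\{R,|z_j|\}$ and then applying Lemma~\ref{LemmaExistenceOfAlpha}, which is exactly your reduction. Your circular-mean computation simply supplies the proof of that Jensen estimate, and your handling of the ``in particular'' clause (via $\sup_{|z|\leq R}\left|\frac{z-z_j}{1-\overline{z}_jz}\right|=\frac{R+|z_j|}{1+R|z_j|}$, monotonicity of $r\mapsto\frac{R+r}{1+Rr}$, and $q(0)\neq 0$) fills in a deduction the paper leaves unstated.
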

\begin{proof}
By Jensen's formula (see \cite{rud})
\begin{eqnarray*}
\sup _{|z|\leq R}|B(Z_n,z)|\geq |q(0)|\prod _{j=1}^n\max\{R,|z_j|\}.
\end{eqnarray*}
Choose $\alpha$ as in Lemma \ref{LemmaExistenceOfAlpha} we have the conclusion of Lemma \ref{LemmaSomeEstimatesForBlaschkeProducts}.
\end{proof}
\section{Some set functions}\label{SectionSetFunctions}
We use notations in Sections \ref{DefinitionNonTangentialLimitPoints} and \ref{DefinitionWeightedBlaschkeProducts}. Assume throughout this Section that
$E$ is a relative closed subset in $\mathbb{D}$ having infinitely many points, whose non-tangential limit points $E_0$ has Lebesgue measure zero:
$m(E_0)=0$. Fixed $q(z)$ a function provided by Theorem \ref{TheoremHayman}, normalized by $||q||_{\infty}=1$. (If $E$ is compact in $\mathbb{D}$ we take
$q(z)\equiv 1$).

Let us introduce some definitions.
\begin{definition}
Let $Z_n$ $=$ $(z_1, z_2, \ldots, z_n)$ $\in$ $\overline{\mathbb{D}}^n$. For all $0\leq j\leq n$ define $Z_{j}=\{z_1,\ldots ,z_j\}$, in particular
$Z_0:=\emptyset$. Put
\begin{align} V(Z_n)
    &= \prod_{1 \leq j \leq n}|B_q(Z_{j-1}, z_j)|,\\
\mu(z_1, z_2, \ldots, z_n)
    &= \sum_{1 \leq j \leq n} \frac{1}{|B_j(Z_n, z_j)|},\\
M(z_1, z_2, \ldots, z_n)
    &= \sup_{z \in E}|B_q(Z_n, z)|.
\end{align}\label{DefinitionSetFunctions}
\end{definition}
The function $V(Z_n)$ in the above definition can be more explicitly written as
\begin{equation}
V(Z_n)=\prod _{j=1}^n|q(z_j)|\prod _{1\leq j<k\leq n}|\frac{z_j-z_k}{1-\overline{z_j}{z_k}}|.\label{EquationOfFunctionV}
\end{equation}

\begin{definition}
Let $E$ be a subset of $\overline{\mathbb{D}}$ which contains infinitively many points. Put
\begin{align}
V_n(E)
    &= \sup_{Z_n \in E^n} V(Z_n),\\
\mu_n(E)
    &= \inf_{Z_n \in E^n,~V(Z_n)=V_n(E)} \mu(Z_n),\\
M_n(E)
    &= \inf_{Z_n \in E^n,~V(Z_n)=V_n(E)} M(Z_n).
\end{align}\label{DefinitionOptimalSetFunctions}
\end{definition}

The set functions defined above are analog to the set functions defined in (weighted) potential theory for subsets of $\mathbb{C}$ (see for example
Section 5.5 in \cite{ransford}). The sequence $Z_n\in \overline{E}^n$ for which $V_n(E)=V(Z_n)$ are analog to the Fekete points. In case $q(z)\not\equiv
1$, $V_n(E)^{2/n(n-1)}$ is an analog of the $n$-th diameter. However, for $q(z)\not\equiv 1$, $V_n$ has no analog in the weighted potential theory for
$\mathbb{C}$. This is because the function $q(z)$ occurs in $V(Z_n)$ only $n$ times instead of $n(n-1)/2$ times.

\begin{lemma}
If $V_n(E)$ $=$ $V(z_1, z_2, \ldots, z_n)$ then $z_j$ $\in$ $E$ for all $j$ $=$ $1$, $2$, \ldots, $n$, and $|B_{q}(Z_{n,j}, z_j)|$ $=$ $\displaystyle
\sup_{z \in E}|B_{q}(Z_{n,j}, z)|=M(Z_{n,j})$. \label{LemmaFeketePoints}\end{lemma}
\begin{proof}

Since $E$ has infinitely many points, we have that $V_n(E)>0$. Hence since
\begin{eqnarray*}
\lim _{z\in E,|z|\rightarrow 1}|q(z)|=0,\end{eqnarray*} and since $|B(z)|\leq 1$ for any Blaschke product, it follows that $z_j$ $\in$ $E$ for all $j$.

From the definition of $V(Z_n)$ we see that
\begin{eqnarray*}
0<V(Z_n)=V(Z_{n,j})\times |B_q(Z_{n,j},z_j)|.
\end{eqnarray*}
Since $V(Z_n)=V(E_n)$ it follows that $|B_q(Z_{n,j},z_j)|=M(Z_{n,j})$.
\end{proof}
\begin{proposition}
Let $z_1$, $z_2$, \ldots, $z_n$ and $\zeta_1$, $\zeta_2$, \ldots, $\zeta_{n+1}$ be points in $\overline E$ such that $ V(z_1, z_2, \ldots, z_n)$ $=$ $ V_n$ and $ V(\zeta_1, \zeta_2, \ldots, \zeta_{n+1})$ $=$ $ V_{n+1}$, then
\[
\mu(\zeta_1, \zeta_2, \ldots, \zeta_{n+1}) M(z_1, z_2, \ldots, z_n) \leq (n+1).
\]
\label{Mu_TildeMRelationProp}
\end{proposition}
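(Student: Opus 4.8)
The plan is to exploit the multiplicative structure of $V$ together with the extremality of the Fekete tuples realizing $V_n$ and $V_{n+1}$. The basic observation, immediate from the ordered product defining $V(Z_n)$ in Definition \ref{DefinitionSetFunctions} (or from the symmetric formula (\ref{EquationOfFunctionV})), is that $V$ transforms multiplicatively under adjoining or deleting a point: for any $z\in \overline{\mathbb{D}}$,
\begin{equation*}
V(z_1,\ldots ,z_n,z)=V(z_1,\ldots ,z_n)\,|B_q(Z_n,z)|,
\end{equation*}
and, writing $W_{n+1,j}$ for the $n$-tuple obtained from $(\zeta_1,\ldots ,\zeta_{n+1})$ by deleting $\zeta_j$,
\begin{equation*}
V(\zeta_1,\ldots ,\zeta_{n+1})=V(W_{n+1,j})\,|B_q(W_{n+1,j},\zeta_j)|.
\end{equation*}

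First I would bound $M(z_1,\ldots ,z_n)$. For every $z\in E$ the augmented tuple $(z_1,\ldots ,z_n,z)$ lies in $E^{n+1}$, so $V(z_1,\ldots ,z_n,z)\le V_{n+1}$; combining this with the adjoining identity and $V(z_1,\ldots ,z_n)=V_n$ gives $|B_q(Z_n,z)|\le V_{n+1}/V_n$ for all $z\in E$, and taking the supremum yields $M(z_1,\ldots ,z_n)\le V_{n+1}/V_n$. Next I would bound $\mu(\zeta_1,\ldots ,\zeta_{n+1})$. For each $j$ the deleted tuple $W_{n+1,j}$ lies in $E^n$, so $V(W_{n+1,j})\le V_n$; the deletion identity and $V(\zeta_1,\ldots ,\zeta_{n+1})=V_{n+1}$ then force $|B_q(W_{n+1,j},\zeta_j)|=V_{n+1}/V(W_{n+1,j})\ge V_{n+1}/V_n$. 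Since the normalization $\|q\|_{\infty}=1$ gives $|q(\zeta_j)|\le 1$, the unweighted factor appearing in $\mu$, namely $|B(W_{n+1,j},\zeta_j)|$, satisfies $|B(W_{n+1,j},\zeta_j)|\ge |B_q(W_{n+1,j},\zeta_j)|\ge V_{n+1}/V_n$, so each of the $n+1$ summands of $\mu$ is at most $V_n/V_{n+1}$, whence $\mu(\zeta_1,\ldots ,\zeta_{n+1})\le (n+1)\,V_n/V_{n+1}$.

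Multiplying the two estimates makes the ratios $V_{n+1}/V_n$ cancel and leaves exactly $n+1$, as claimed. The points to check are minor: $V_n,V_{n+1}>0$ so the ratios make sense (this holds because $E$ is infinite, so the Fekete tuples consist of distinct points and none of the Blaschke factors vanish), and the inequality must run the right way when passing from the weighted factor $B_q$ to the unweighted factor in $\mu$ — the normalization $\|q\|_{\infty}=1$ is precisely what guarantees this. I do not expect a genuine obstacle here; the argument is a one-step extremality comparison, and the only real care needed is the bookkeeping of which augmented or deleted tuple is being tested against $V_{n+1}$ versus $V_n$.
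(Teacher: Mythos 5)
Your proof is correct and is essentially the paper's own argument: both bound $M(z_1,\ldots,z_n)\le V_{n+1}/V_n$ by adjoining a point of $E$ to the extremal $n$-tuple, and bound each summand of $\mu$ via the deletion identity $V_{n+1}=V(W_{n+1,j})\,|B_q(W_{n+1,j},\zeta_j)|$ together with $V(W_{n+1,j})\le V_n$, then combine. The only (harmless) differences are bookkeeping: you keep the ratios $V_{n+1}/V_n$ explicit and avoid assuming the supremum defining $M$ is attained, whereas the paper chains the two inequalities directly through $M\le |B_q(W_{n+1,k},\zeta_k)|$.
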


\begin{proof}{}
If $z_0$ is the point in $\overline E$ such that $|B_q(Z_n, z_0)|$ $=$ $\displaystyle\prod_{1 \leq j \leq n}d(z_0, z_j) |q(z)|$ $=$ $ M(z_1, z_2, \ldots,
z_n)$, then $ M(z_1, z_2, \ldots, z_n)  V(z_1, z_2, \ldots, z_n)$ $=$ $ V(z_0, z_1, z_2, \ldots, z_n)$ $\leq$ $ V_{n+1}$ (see Lemma
\ref{LemmaFeketePoints}).

Therefore, for $k$ $=$ $1$, $2$, \ldots, $n+1$, we have
\begin{align*}
 M(z_1, z_2, \ldots, z_n)
    &\leq \frac{ V_{n+1}}{ V(z_1, z_2, \ldots, z_n)} \leq \frac{ V(\zeta_1, \zeta_2, \ldots, \zeta_{n+1})}{ V(\zeta_1, \ldots, \zeta_{k-1}, \zeta_{k+1}, \ldots, \zeta_n)}\\
    &=|q(\zeta_k)| \prod_{1 \leq j \neq k \leq n+1}|\frac{\zeta _j-\zeta _k}{1-\overline{\zeta}_j\zeta_k}| \leq 1.
\end{align*}
It follows that
\[
\mu(\zeta_1, \zeta_2, \ldots, \zeta_{n+1}) \leq \frac{(n+1)}{ M(z_1, z_2, \ldots, z_n)}.
\]
This proves the proposition.
\end{proof}

\begin{proposition}
$\displaystyle \lim_{n \rightarrow \infty}  V_n^{1/n}$ $=$ $\displaystyle \lim_{n \rightarrow \infty}  M_n$ $=$ $0$.
\label{ConvergenceOfTildeV_TildeMProp}
\end{proposition}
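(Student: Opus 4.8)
The plan is to prove the two assertions in turn, reducing the convergence $M_n\to 0$ to a multiplicative-ratio estimate for $V_n$. Write $W(Z_n)=\prod_{1\le j<k\le n}|\frac{z_j-z_k}{1-\overline{z_j}z_k}|$ and $P(Z_n)=\prod_{j=1}^n|q(z_j)|$, so that (\ref{EquationOfFunctionV}) reads $V(Z_n)=P(Z_n)W(Z_n)$; since every pseudohyperbolic factor and every $|q(z_j)|$ is at most $1$, this gives $V_{n+1}\le V_n\le\cdots\le V_1\le 1$, and all $V_n>0$ because $E$ is infinite, so logarithms below are legitimate. If $z_0\in E$ realizes $M(Z_n)=\sup_{z\in E}|B_q(Z_n,z)|$ for a configuration with $V(Z_n)=V_n$, then appending $z_0$ yields $V_n\,M(Z_n)=V(Z_n\cup\{z_0\})\le V_{n+1}$; taking the infimum over such configurations gives $M_n\le V_{n+1}/V_n$. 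Hence it suffices to prove $V_n^{1/n}\to0$ together with $V_{n+1}/V_n\to 0$.

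The heart of the argument is the monotonicity of $V_n^{1/n}$. Fix Fekete points $\zeta_1,\dots,\zeta_{n+1}$ with $V(\zeta_1,\dots,\zeta_{n+1})=V_{n+1}$ (they lie in $E$ by Lemma \ref{LemmaFeketePoints}). Counting how many deletions each factor survives, the product over the $n+1$ deletions is $\prod_{i=1}^{n+1}V(Z_{n+1,i})=P^n W^{n-1}$, where $P=P(Z_{n+1})$, $W=W(Z_{n+1})$ and $V_{n+1}=PW$. Since $W\le 1$ we have $P^nW^{n-1}=V_{n+1}^n/W\ge V_{n+1}^n$, while $V(Z_{n+1,i})\le V_n$ forces $\prod_i V(Z_{n+1,i})\le V_n^{n+1}$. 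Combining, $V_{n+1}^n\le V_n^{n+1}$, i.e. $V_n^{1/n}$ is non-increasing. Taking logarithms, $\log(V_{n+1}/V_n)\le\frac1n\log V_n=\log(V_n^{1/n})$, so once $V_n^{1/n}\to0$ is known, the ratio $V_{n+1}/V_n$ is squeezed to $0$ as well, completing $M_n\to 0$.

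It remains to show $\lim_n V_n^{1/n}=0$. Here I would bound $V(Z_n)$ for an arbitrary $Z_n\in E^n$ by splitting the indices, relative to a cut-off $0<\rho<1$, into the $a$ indices with $|z_j|\le\rho$ and the $b=n-a$ indices with $|z_j|>\rho$. On the collar $\{|z|>\rho\}$ one has $|q(z_j)|\le s(\rho):=\sup\{|q(z)|:z\in E,\,|z|\ge\rho\}$, and $s(\rho)\to 0$ as $\rho\uparrow 1$ by the defining property (\ref{EquationTheFunctionQ}) of $q$; inside the compact core the pseudohyperbolic distance is bounded by $d_\rho:=\sup_{|z|,|w|\le\rho}|\frac{z-w}{1-\overline w z}|<1$. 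Discarding the remaining nonpositive-logarithm factors gives $\log V(Z_n)\le b\log s(\rho)+\binom{a}{2}\log d_\rho$. Minimizing the right-hand side over the split ($a+b=n$), the competition between the linear coefficient $\log s(\rho)$ and the quadratic coefficient $\log d_\rho$ forces the optimal $a$ to stay bounded independently of $n$, so $\frac1n\log V(Z_n)\le\log s(\rho)+O(1/n)$ uniformly in $Z_n$, whence $\frac1n\log V_n\le\log s(\rho)+O(1/n)$; letting $n\to\infty$ and then $\rho\uparrow 1$ drives this to $-\infty$, which is exactly $V_n^{1/n}\to0$.

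The step I expect to be most delicate is upgrading convergence of $M_n$ from a $\liminf$ to a genuine limit: the cheap bound $M_n\le V_{n+1}/V_n$ with $V_n^{1/n}\to0$ only forces the Ces\`aro average of $\log(V_{n+1}/V_n)$ to $-\infty$, not the individual ratios. What rescues the argument is the monotonicity $V_{n+1}^n\le V_n^{n+1}$, and here one must be careful precisely because $q$ enters $V(Z_n)$ only linearly (as the authors note) rather than through $\binom{n}{2}$ symmetric factors as in classical transfinite-diameter theory: the usual Fekete cancellation leaves the residual factor $P^nW^{n-1}$, and the observation $P^nW^{n-1}=V_{n+1}^n/W\ge V_{n+1}^n$, valid because $W\le 1$, is exactly what absorbs this asymmetry and restores the clean inequality. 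The existence and attainment of Fekete configurations and the uniformity of $q\to 0$ on collars are the remaining routine points.
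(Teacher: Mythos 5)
Your proof is correct, but it reaches the conclusion $M_n \to 0$ by a genuinely different route than the paper. For the first assertion $V_n^{1/n}\to 0$ the two arguments are essentially the same core/collar splitting: the paper sorts the points by the size of $|q(z_j)|$ and uses the crude case distinction $k_n/n\ge 1/3$ versus $k_n/n<1/3$ to get $\limsup_n V_n^{1/n}\le \delta^{1/3}$, while you sort by $|z_j|\le\rho$ versus $|z_j|>\rho$ and optimize over the split to get the sharper $\limsup_n V_n^{1/n}\le s(\rho)$. One slip of wording there: you must \emph{maximize} the right-hand side $b\log s(\rho)+\binom{a}{2}\log d_\rho$ over splits (the worst case), not minimize it --- your bounded-optimal-$a$ computation is in fact the maximization, since the concave quadratic peaks at $a^*=\tfrac12+\log s(\rho)/\log d_\rho$, independent of $n$. (Also, when $E$ is compact the collar is empty for $\rho$ near $1$, so $b=0$ and the bound degenerates harmlessly; the paper treats that case separately.) For the second assertion the paper is much more direct: for a Fekete configuration, $M(Z_n)\le\bigl(\prod_j\sup_{z\in E}|B_q(Z_{n,j},z)|\bigr)^{1/n}=\bigl(\prod_j|B_q(Z_{n,j},z_j)|\bigr)^{1/n}\le\bigl(\prod_j|B_q(Z_{j-1},z_j)|\bigr)^{1/n}=V_n^{1/n}$, the middle equality being Lemma \ref{LemmaFeketePoints}; this gives $M_n\le V_n^{1/n}$ in three lines. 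You instead prove $M_n\le V_{n+1}/V_n$ (which is exactly the paper's own append-a-point step in the proof of Proposition \ref{Mu_TildeMRelationProp}) and then supply a new monotonicity lemma $V_{n+1}^{n}\le V_n^{n+1}$ via the classical Fekete deletion count, correctly observing that the weighted asymmetry leaves the residue $P^nW^{n-1}=V_{n+1}^n/W$ and that $W\le 1$ absorbs it; combining, $M_n\le V_{n+1}/V_n\le V_n^{1/n}$, which recovers the paper's bound. Your route is longer and needs the extra lemma (and your self-diagnosis is right that the ratio bound alone, without monotonicity, would only control a Ces\`aro average), but it buys the monotonicity of $V_n^{1/n}$ --- the analog of the decreasing $n$-th diameters in transfinite-diameter theory --- which the paper never establishes and which is of independent interest; both routes rest equally on Lemma \ref{LemmaFeketePoints} for the existence of Fekete points inside $E$.
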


\begin{proof}{}
If $E$ is compact in $\mathbb{D}$ then there exists $1>r>0$ such that for all $z\in E$ we have $|z|\leq r$. Hence
\begin{eqnarray*}
V_n^{1/n}\leq (\frac{2r}{1+r^2})^{(n-1)/2}\rightarrow 0
\end{eqnarray*}
as $n\rightarrow 0$.

We now consider the case in which $\overline {E}\cap \partial{\mathbb{D}} \not=\emptyset$.

Fix a number $\delta$ $>$ $0$. By properties of $q(z)$ (see \cite{hay}), it follows that there exist an $r$ $<$ $1$ such that $|z|$ $<$ $r$ whenever $z$
$\in$ $\overline E$ and $q(z)$ $>$ $\delta$. For each n, we rearrange $z_1$, $z_2$, \ldots, $z_n$ so that: there is a constant $k_n$ for which
$|q(z_j)|\leq \delta$ for $1\leq j\leq k_n$, and $|z_j|\leq r$ for $k_{n}+1\leq j\leq n$. We have
\begin{align*}
 V_n
    &= \prod_{1 \leq j < l \leq n}d(z_j, z_l) \prod_{1 \leq j \leq n}|q(z_j)|\\
    &\leq\prod_{k_n + 1 \leq j < l \leq n}d(z_j, z_l) \prod_{1 \leq j \leq k_n}|q(z_j)| \leq \eta^{(n - k_n)(n - k_n - 1)/2} \delta^{k_n},
\end{align*}
where $\eta$ $=$ $\frac{2r}{1 + r^2}$. It follows that $ V_n^{1/n}$ $\leq$ $\eta^{(n - k_n)(n - k_n - 1)/2n}\delta^{k_n/n}$. From this, we see that, if
$k_n/n$ $\geq$ $1/3$, then $ V_n^{1/n}$ $\leq$ $\delta^{1/3}$, and if  $k_n/n$ $<$ $1/3$, then $ V_n^{1/n}$ $\leq$ $\eta^{n/9}$. Hence
\[
\limsup_{n \rightarrow \infty} V_n^{1/n} \leq \limsup_{n \rightarrow \infty} \max\{\delta^{1/3},\eta^{n/9}\} = \delta^{1/3}.
\]
Since $\delta$ can be chosen arbitrarily, we deduce $\displaystyle\lim_{n \rightarrow \infty} V_n^{1/n}$ $=$ $0$.

To prove the second part of Proposition \ref{ConvergenceOfTildeV_TildeMProp}, we choose $Z_n=\{z_1,\ldots ,z_n\}\in E$ so that $V_n(E)=V(Z_n)$. Noting
that $|B_q(Z_n,z)|$ $\leq$ $|B_{q}(Z_{n,j},z)|$ and $|B_{q}(Z_{n,j},z_j)|$ $\leq$ $|B_{q}(Z_{j-1},z_j)|$ for all $j$ = $1$, $2$, \ldots, $n$, using Lemma
\ref{LemmaFeketePoints} we have
\begin{align*}
 M(Z_n)
    &=  M(z_1, z_2, \ldots, z_n) = \sup_{z \in E}|B_q(Z_n,z)| \leq \left(\prod_{1 \leq j \leq n} \sup_{z \in E}|B_{q}(Z_{n,j}, z)|\right)^{1/n}\\
    &= \left(\prod_{1 \leq j \leq n} |B_{q}(Z_{n,j}, z_j)|\right)^{1/n} \leq \left(\prod_{1 \leq j \leq n} |B_q(Z_{j-1}, z_j)|\right)^{1/n} =  V_n(E)^{1/n}.
\end{align*}
Taking supremum on all $Z_n$ with $V(Z_n)=V_n(E)$ we obtain
\begin{eqnarray*}
M_n(E)\leq V_n(E)^{1/n}.
\end{eqnarray*}
This leads to the convergence of $M_n$ to $0$.
\end{proof}

Now we define the function $\varphi (\epsilon )$ in Theorem \ref{TheoremMain}. Applying Proposition \ref{Mu_TildeMRelationProp}, there exists a
continuous function $h:[1,\infty )\rightarrow (0,\infty )$ such that $h$ is non-increasing, $\lim _{x\rightarrow\infty}h(x)=0$ and $M_n\leq h(n)$for all
$n\in\NN$. We can define such an $h$ as follows: First, define $h(n)=\sup _{k\geq n}M_k$. Then $h(n+1)\leq h(n)$, and by Lemma
\ref{ConvergenceOfTildeV_TildeMProp}, we see that $\lim _{n\rightarrow\infty} h(n)=0$. Then we extend it appropriately.

We take $\epsilon _0=\displaystyle{\frac{h(1)}{2}}$. Since $\displaystyle{\frac{h(x)}{x+1}}$ is continuous and strictly decreasing, and $\lim _{x\rightarrow\infty}h(x)=0$, we can define a function $\varphi :(0,\epsilon _0)\rightarrow (0,\infty)$ as follows:
\begin{equation}
\varphi (\epsilon )=h(x)\mbox{ iff }\epsilon =\frac{h(x)}{x+1}.\label{EquationOfPhi}
\end{equation}

We note that $\varphi $ is non-decreasing and $\lim _{\epsilon \rightarrow 0}\varphi (\epsilon)=0$.
\section{Proof of Theorem \ref{TheoremMain}}\label{SectionProofOfMainTheorem}
Fix $R>0$. Let $g_p(E,\epsilon ,R)$ be as in (\ref{EquationOfFunctionG}), let $\varepsilon _0$ and $\varphi (\epsilon )$ be as in the previous section.
Let $\alpha >0$ be the constant in Lemma \ref{LemmaExistenceOfAlpha}.
\begin{proof}{(of Theorem \ref{TheoremMain})}
By definition of $C_p(E,\epsilon ,R)$ and $g(E,\epsilon ,R,q)$, recall that $||q||_{\infty}=1$ it follows that $g(E,\epsilon ,R,q)\leq C_p(E,\epsilon
,R)$. Hence it remains to prove the right hand-sided inequality of (\ref{EquationEstimateCaseE0HasZeroMeasure}).

Let $Z_n=(z_1,\ldots ,z_n)\in E^n$. It follows from Theorem \ref{TheoremRecovery} that
 \begin{equation}
C_p(E,\epsilon ,R)\leq K\times \varepsilon \mu (Z_n)\times \max_{1\leq k\leq n}\sup _{|z|\leq R}|B(Z_{n,k},z)|+K\sup _{|z|\leq
R}|B(Z_n,z)|,\label{EqautionProofOfMainTheorem.1}
\end{equation}
for some constant $K>0$ depending only on $R$ and $p$. Applying Lemma \ref{LemmaExistenceOfAlpha}, remark that $0<\alpha <1$, we obtain
\begin{equation}
C_p(E,\epsilon ,R)\leq K\times |q(0)|^{-\alpha}\times (\varepsilon \mu (Z_n)+1)\times \sup _{|z|\leq
R}|B_q(Z_n,z)|^{\alpha}.\label{EquationTheoremMain.1}
\end{equation}

It follows from Proposition \ref{ConvergenceOfTildeV_TildeMProp} that $\displaystyle\lim_{n \rightarrow \infty}  M_n(E)$ $=$ $0$. Thus, we can choose the
smallest $n_0$ such that $ M_{n_0}(E)$ $\leq$ $\varphi (\varepsilon )$ $<$ $ M_{n_0-1}(E)$ for all $\varepsilon$ less than $\varepsilon_0$. Then, by
Proposition \ref{Mu_TildeMRelationProp}
\[
 M_{n_0}(E) \leq \varphi (\varepsilon ) <  M_{n_0-1}(E) \leq \frac{n_0}{\mu (Z_{n_0})},
\]
for any finite sequence $Z_{n_0}=\{z_1,\ldots ,z_{n_0}\}\in E^{n_0}$ with $V(Z_{n_0})=V_{n_0}(E),~M(Z_{n_0})=M_{n_0}(E)$. In particular, for such
$Z_{n_0}$ we have $\varphi (\varepsilon )\mu (Z_{n_0})\leq n_0$.

On the other hand, we have $\varphi (\epsilon ) < M_{n_0-1}(E)$ $\leq$ $h(n_0-1)$ for $n_0$ $\geq$ $N$. This and (\ref{EquationOfPhi}) give $n_0$ $\leq$
$x+1$, where
\begin{eqnarray*}
\epsilon =\frac{h(x)}{x+1}.
\end{eqnarray*}
 Hence,
\begin{equation}
\epsilon \mu (Z_{n_0})=\frac{\epsilon}{\varphi (\epsilon )}\varphi (\epsilon )\mu (Z_{n_0})\leq\frac{\epsilon}{\varphi (\epsilon )}n_0
\leq\frac{\epsilon}{\varphi (\epsilon )}(x+1)=\frac{\frac{h(x)}{x+1}}{h(x)}(x+1)=1.\label{EqautionProofOfMainTheorem.2}
\end{equation}

Now, $M(Z_{n_0})= M_{n_0}(E)\leq \varphi (\varepsilon )$ implies that
\begin{eqnarray*}
\sup _{|z|\leq R}|B_q(Z_{n_0},z)|\leq g(E,\varphi (\epsilon ),R,q).
\end{eqnarray*}
This, together with (\ref{EqautionProofOfMainTheorem.2}), plugged into (\ref{EquationTheoremMain.1}) yields
\begin{eqnarray*}
C_p(E,\epsilon ,R)\leq 2K \times |q(0)|^{-\alpha}\times g^{\alpha}(E,\varphi (\epsilon ),R,q).
\end{eqnarray*}
This concludes the proof of Theorem \ref{TheoremMain}.
\end{proof}
\section{Corollaries and examples}\label{SectionCorollaryAndExample}
We keep the same assumptions as in Section \ref{SectionProofOfMainTheorem}.
\begin{corollary}
If there exist $C>0$, $\sigma >0$ and $N>0$ such that $M_n(E)\leq Cn^{-\sigma}$ for all $n\geq N$ then there exists $\epsilon _0>0$ depending only on $E$
and there exists $\kappa >0$ depending only on $\sigma$ such that
\begin{equation}
g(E,\epsilon ,R,q)\leq C_p(E,\epsilon ,R)\leq K\times |q(0)|^{-\alpha}\times g^{\alpha}(E,\epsilon ^{\sigma },R,q).\label{EquationEstimateCaseMnIsSmall}
\end{equation} \label{CorollaryMnIsSmall}\end{corollary}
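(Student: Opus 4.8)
The left-hand inequality $g(E,\epsilon,R,q)\leq C_p(E,\epsilon,R)$ is already the left-hand inequality of Theorem~\ref{TheoremMain}, so only the right-hand inequality needs work. The plan is to feed the hypothesis $M_n(E)\leq Cn^{-\sigma}$ into the bound already proved in Theorem~\ref{TheoremMain},
\[
C_p(E,\epsilon,R)\leq K\times|q(0)|^{-\alpha}\times g^{\alpha}(E,\varphi(\epsilon),R,q),
\]
and replace $\varphi(\epsilon)$ by a genuine power of $\epsilon$. The crucial structural remark is that $g(E,\cdot,R,q)$ is non-decreasing in its second argument: enlarging $\epsilon$ only relaxes the constraint $|B_q(Z_n;\zeta)|\leq\epsilon$ in (\ref{EquationOfFunctionG}) and hence enlarges the admissible family over which the supremum is taken. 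Consequently, once I exhibit a constant $\kappa>0$ depending only on $\sigma$ and a threshold $\epsilon_0>0$ with $\varphi(\epsilon)\leq\epsilon^{\kappa}$ for $0<\epsilon<\epsilon_0$, monotonicity and $\alpha>0$ give $g^{\alpha}(E,\varphi(\epsilon),R,q)\leq g^{\alpha}(E,\epsilon^{\kappa},R,q)$, which is precisely (\ref{EquationEstimateCaseMnIsSmall}) with $\kappa$ in the role of the exponent there.

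So everything reduces to estimating the function $\varphi$ constructed in Section~\ref{SectionSetFunctions}. Recall that $h(n)=\sup_{k\geq n}M_k(E)$, extended to a continuous non-increasing function on $[1,\infty)$, and that $\varphi(\epsilon)=h(x)$ where $\epsilon=h(x)/(x+1)$, see (\ref{EquationOfPhi}). The hypothesis gives, at integers $n\geq N$, the bound $h(n)=\sup_{k\geq n}M_k(E)\leq\sup_{k\geq n}Ck^{-\sigma}=Cn^{-\sigma}$, since $k^{-\sigma}$ is decreasing; monotonicity of $h$ then upgrades this to $h(x)\leq C'x^{-\sigma}$ for all $x$ past some threshold, with $C'$ depending only on $C,\sigma,N$. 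Substituting into the defining relation $\epsilon=h(x)/(x+1)$ and using $x+1\geq x$ yields $\epsilon\leq C'x^{-(\sigma+1)}$, hence $x\leq(C'/\epsilon)^{1/(\sigma+1)}$ for small $\epsilon$; plugging back,
\[
\varphi(\epsilon)=h(x)=\epsilon(x+1)\leq 2\epsilon x\leq 2(C')^{1/(\sigma+1)}\,\epsilon^{\sigma/(\sigma+1)}.
\]
This isolates the natural exponent $\kappa=\sigma/(\sigma+1)$.

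It remains only to clean up the multiplicative constant. The previous display gives $\varphi(\epsilon)\leq C''\epsilon^{\sigma/(\sigma+1)}$ rather than a bare power; but since $\sigma/(\sigma+1)>0$, for any fixed $\kappa<\sigma/(\sigma+1)$ one has $C''\epsilon^{\sigma/(\sigma+1)}\leq\epsilon^{\kappa}$ as soon as $\epsilon<\epsilon_0$, because $\epsilon^{\kappa-\sigma/(\sigma+1)}\to\infty$ as $\epsilon\to 0$. Fixing such a $\kappa$, which depends only on $\sigma$, and the corresponding $\epsilon_0$ completes the argument via the monotonicity step of the first paragraph. The only delicate point, and the one I would check carefully, is the inversion of $\epsilon=h(x)/(x+1)$: one must confirm that this map is a strict decreasing bijection so that $x\to\infty$ as $\epsilon\to 0$, and that the polynomial majorant $h(x)\leq C'x^{-\sigma}$ genuinely survives the continuous extension of $h$ off the integers. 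Both follow from the strict monotonicity of $1/(x+1)$ together with the non-increasing extension of $h$, and neither obstructs the power-law inversion; the $g$-monotonicity step itself is immediate.
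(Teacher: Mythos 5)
Your proposal is correct and follows essentially the same route as the paper: both bound $\varphi(\epsilon)$ by a constant times $\epsilon^{\sigma/(1+\sigma)}$ by inverting the relation $\epsilon = h(x)/(x+1)$ for a power-law majorant $h(x)\approx Cx^{-\sigma}$, and then feed this into the right-hand inequality of Theorem~\ref{TheoremMain}. The only differences are cosmetic and in your favor: the paper simply takes $h(x)=Cx^{-\sigma}$ as the majorant and leaves implicit both the monotonicity of $g(E,\cdot,R,q)$ in its second argument and the absorption of the multiplicative constant into a slightly smaller exponent $\kappa<\sigma/(\sigma+1)$, whereas you spell these steps out (and correctly identify that the exponent in the displayed inequality should be $\kappa$, not $\sigma$).
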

\begin{proof}
If $M_n (E)\leq Cn^{-\sigma }$ for all $n\geq N$ then we choose $h(x)=Cx^{-\sigma}$. So we have
\begin{eqnarray*}
\varphi (\epsilon)=h(x)=Cx^{-\sigma}\leq C_2\epsilon ^{\sigma /(1+\sigma )},
\end{eqnarray*}
since $\epsilon =Cx^{-\sigma}(1+x)^{-1}$. Applying Theorem \ref{TheoremMain} completes the proof of Corollary \ref{CorollaryMnIsSmall}.
\end{proof}
\begin{proof}(Of Corollary \ref{CorollaryCaseEIsCompact})
Since $\overline E\subset U$, there exists $0<r<1$ such that $\sup _{z\in E}|z|\leq r$. We can also choose $q\equiv 1$. Hence we get that $M_n(E)\leq
\displaystyle{(\frac{2r}{1+r})^n}$. So the function $\varphi (\epsilon )$ in Theorem \ref{TheoremMain} satisfies
\begin{equation}
\lim _{\epsilon\rightarrow 0}\frac{\log \epsilon}{\log \varphi (\epsilon )}=1.\label{LRS.2}
\end{equation}
Applying Theorem \ref{TheoremMain} completes the proof of Corollary \ref{CorollaryCaseEIsCompact}.
\end{proof}
Corollary \ref{CorollaryCaseEIsInStolzAngles} is a consequence of Corollary \ref{CorollaryMnIsSmall}, because of the following result
\begin{proposition}
Assume that $E$ is contained in some Stolz angles. Then there exist $\sigma$, $C$ and $N$ $>$ $0$ such that $ M_n(E)$ $\leq$ $Cn^{-\sigma}$ for $n$ $\geq$ $N$.
\label{CorrectnessOfGeneralizationProp}
\end{proposition}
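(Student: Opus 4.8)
The plan is to reduce the statement to a weighted transfinite-diameter estimate and then run a short energy computation after mapping each Stolz angle to a cone in the right half-plane. Write $\zeta_1,\dots,\zeta_m$ for the vertices of the Stolz angles whose union contains $E$ and $\sigma=\max_i\sigma_i$ for the largest aperture. First I would fix the auxiliary function by taking $q$ to be the product of the singular inner functions $S_{\zeta_i}(z)=\exp\!\left(-\tfrac{\zeta_i+z}{\zeta_i-z}\right)$, so that $q$ is zero-free in $\mathbb D$, $\|q\|_\infty=1$, and $|q(z)|=\exp\!\left(-\sum_{i=1}^m\tfrac{1-|z|^2}{|\zeta_i-z|^2}\right)\to0$ as $z\to\partial\mathbb D$ inside the union, since each Stolz angle clusters on $\partial\mathbb D$ only at its own vertex; thus $q$ is an admissible Hayman function. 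By Proposition \ref{ConvergenceOfTildeV_TildeMProp} one has $M_n(E)\le V_n(E)^{1/n}$, and since $E$ lies in the union of the angles, $V_n(E)\le\sup\{V(Z_n):z_1,\dots,z_n\in\bigcup_i\Omega_{\sigma_i}(\zeta_i)\}$. Hence it suffices to prove $\log V(Z_n)\le -c\,n^{3/2}$ uniformly for such tuples, which forces $V_n(E)^{1/n}\le e^{-c\sqrt n}$ and so the claimed $M_n(E)\le Cn^{-\sigma}$ for \emph{every} $\sigma>0$.

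The heart of the argument is a single-angle estimate, proved after conjugating by the M\"obius map $w=\tfrac{\zeta+z}{\zeta-z}$ that sends $\mathbb D$ onto $\{\Re w>0\}$ and $\zeta$ to $\infty$. Because the pseudohyperbolic distance $d(\cdot,\cdot)$ is M\"obius invariant it becomes $d(z,z')=\left|\tfrac{w-w'}{w+\bar w'}\right|$, while $|S_\zeta(z)|=e^{-X}$ with $X:=\Re w=\tfrac{1-|z|^2}{|\zeta-z|^2}$. I would record two elementary facts valid for $z\in\Omega_\sigma(\zeta)$: that $X\ge 1/\sigma^2$ (the exponents are bounded below), and that the image lies in a cone $|\arg w|\le\beta$ with $\beta=\arccos(1/2\sigma)<\pi/2$; both follow from $|\zeta-z|\le\sigma(1-|z|)$ by directly estimating $\Re w$ and $|w|/\Re w$. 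Writing $w=x+iy$, $w'=x'+iy'$, the identity
\[
1-d(z,z')^2=\frac{4xx'}{(x+x')^2+(y-y')^2}
\]
combined with $|y-y'|\le(\tan\beta)(x+x')$ inside the cone yields the key inequality $\log d(z,z')\le-\tfrac12\bigl(1-d^2\bigr)\le-\tfrac{\cos^2\beta}{2}\cdot\tfrac{\min(X,X')}{\max(X,X')}$.

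With these in hand the energy estimate is routine. For a tuple $Z_n$, assign each $z_j$ to a vertex $i(j)$ of an angle containing it and set $X_j=\tfrac{1-|z_j|^2}{|\zeta_{i(j)}-z_j|^2}\ge 1/\sigma^2$; then $\log|q(z_j)|\le-X_j$, and discarding the nonpositive contribution of cross-angle pairs gives
\[
\log V(Z_n)\le -\sum_j X_j-c_0\sum_{i}\ \sum_{\substack{j<k\\ i(j)=i(k)=i}}\frac{\min(X_j,X_k)}{\max(X_j,X_k)},\qquad c_0=\tfrac{\cos^2\beta}{2}.
\]
Within a fixed angle, order its points so that $X_{(1)}\le\cdots\le X_{(n_i)}$; then $\sum_{l<k}\tfrac{X_{(l)}}{X_{(k)}}\ge(k-1)\tfrac{\sigma^{-2}}{X_{(k)}}$, and the termwise bound $X_{(k)}+c_0(k-1)\sigma^{-2}X_{(k)}^{-1}\ge2\sqrt{c_0\sigma^{-2}(k-1)}$ from AM--GM makes that angle's contribution $\le-c_1 n_i^{3/2}$. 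Summing over the $m$ angles and using convexity of $t\mapsto t^{3/2}$ (so $\sum_i n_i^{3/2}\ge m^{-1/2}n^{3/2}$) produces $\log V(Z_n)\le-c\,n^{3/2}$, completing the plan.

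The main obstacle is the single-angle distance inequality of the second paragraph: everything downstream is bookkeeping and a one-variable AM--GM, but extracting a clean lower bound for $1-d^2$ in terms of the weight exponents $\min(X,X')/\max(X,X')$ is exactly where the hypothesis that $E$ lies in a Stolz angle is used, through the cone aperture $\beta<\pi/2$. One should also note that a point lying in several of the angles may be assigned to any one of them, and that the cross-angle pairs, whose pseudohyperbolic distances are close to $1$, are simply dropped since their logarithms are nonpositive; neither affects the bound.
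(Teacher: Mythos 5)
Your proof is correct, but it takes a genuinely different route from the paper's, and it actually proves more. The paper also starts by choosing its own weight --- there $q(z)=(z-a_1)\cdots(z-a_m)$, a polynomial vanishing at the finitely many boundary accumulation points --- and then treats a single Stolz angle \emph{softly}: inside the angle, $|q(z)|>\delta$ forces $|z|\le 1-c\delta$, so the splitting argument from the proof of Proposition \ref{ConvergenceOfTildeV_TildeMProp} gives $V_n^{1/n}\le C\max\{\delta^{1/3},\eta^{n/9}\}$, and optimizing $\delta=n^{-3\sigma}$ yields $M_n(E)\le Cn^{-\sigma}$ only for $\sigma\in(0,1/6)$; unions of angles are then handled by sub-multiplicativity, $V_n(E)\le V_l(E_1)\,V_k(E_2)$, which halves the exponent. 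You instead take $q$ to be a product of singular inner functions, so that $-\log|q|$ is exactly the real part of the half-plane coordinate $w=(\zeta+z)/(\zeta-z)$, and you run an explicit weighted-energy computation in the cone image of each angle. I checked the key steps: the identity $1-d^2=4xx'/\bigl((x+x')^2+(y-y')^2\bigr)$, the aperture bound $\cos(\arg w)\ge 1/(2\sigma)$, the lower bound $X\ge\sigma^{-2}$, the inequality $\log d\le-\tfrac12(1-d^2)\le-\tfrac{\cos^2\beta}{2}\min(X,X')/\max(X,X')$, the AM--GM step, and the convexity bound $\sum_i n_i^{3/2}\ge m^{-1/2}n^{3/2}$ are all valid, and the small-$n_i$ cases you gloss over are routine. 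The resulting estimate $\log V_n(E)\le-c\,n^{3/2}$, hence $M_n(E)\le V_n(E)^{1/n}\le e^{-c\sqrt n}$, is strictly stronger than the paper's conclusion: it gives $M_n(E)\le C_\sigma n^{-\sigma}$ for \emph{every} $\sigma>0$, with no degradation when passing to unions (cross-angle pairs are simply discarded since their logarithms are nonpositive). What the paper's route buys is brevity and reuse of machinery already in place; what yours buys is a far better rate and uniformity over unions. One remark: since $M_n(E)$ depends on the fixed weight, your proof and the paper's bound set functions built from different admissible $q$'s; this is harmless, because the paper's own proof likewise fixes its own $q$, the proposition is only used (via Corollary \ref{CorollaryMnIsSmall}) for some admissible Hayman weight, and your $q$ satisfies the needed normalizations ($\|q\|_\infty=1$, $|q(0)|=e^{-m}>0$).
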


\begin{proof}{}

Let $\overline{E}\cap \partial U=\{a_1,a_2,...,a_n\}$. We take in this case $q(z)=(z-a_1)(z-a_2)...(z-a_n)$.

We seperate the proof into three steps.
\begin{enumerate}[1.]
\item Suppose that $\overline E$ lies inside U. In this case $M_n$ $\leq$ $n^{-\sigma}$ for sufficiently large $n$ (see Corollary \ref{CorollaryCaseEIsCompact}).

\item Suppose that $\overline E$ $\cap$ $\partial U$ has only one point. By means of some rotation, we may assume that it this point is $1$.

We have $q(z)$ $=$ $z - 1$. We see that if $|q(z)|$ $>$ $\delta$ $>$ $0$ for some $z$ in $E$, then $|z|$ $<$ $r_\delta$ = $1 - c\delta$ where $c$ is a constant depending on the Stolz angle with vertex at $1$. Refering to the proof of Proposition \ref{ConvergenceOfTildeV_TildeMProp}, we get
\begin{equation}
 V_n^{1/n} \leq C\max\{\delta^{1/3}, \eta^{n/9}\}.
\label{eqmax}
\end{equation}
Choosing $\delta$ $=$ $n^{-3\sigma}$ ($\sigma$ $\in$ $(0,1/6)$), we have
\[
\eta = \frac{2r_\delta}{1 + r_\delta^2} = \frac{2(1 - c n^{-3\sigma})}{1 + (1 - c n^{-3\sigma})^2} = \frac{2n^{6\sigma} - 2cn^{3\sigma}}{2n^{6\sigma} - 2cn^{3\sigma} + c^2}.
\]
Hence,
\begin{align*}
\eta^{n/9}
    &= \left(\frac{2n^{6\sigma} - 2cn^{3\sigma}}{2n^{6\sigma} - 2cn^{3\sigma} + c^2}\right)^{n/9} = \left(1 - \frac{c^2}{2n^{6\sigma} - 2cn^{3\sigma} + c^2}\right)^{n/9}\\
    &\leq \left(1 - \frac{c^2}{2n^{6\sigma}}\right)^{\frac{2n^{6\sigma}}{c^2}\frac{c^2n^{1-6\sigma}}{18}} \leq \exp\left(-\frac{c^2n^{1-6\sigma}}{18}\right) \leq n^{-\sigma}
\end{align*}
for sufficiently large $n$. Combining with (\ref{eqmax}), the assertion follows.

\item Now, consider the general case. It suffices to show that if $E_1$ and $E_2$ are two sets satisfy $ V_n^{1/n}(E_i)$ $\leq$ $C n^{-\sigma_i}$ for $n$ $\geq$ $N$ ($i$ $=$ $1$, $2$) and $E$ $=$ $E_1$ $\cup$ $E_2$, then $ V_n^{1/n}(E)$ $\leq$ $C n^{-\sigma}$, for $n$ $\geq$ $2N$ and $\sigma$ $=$ $\min\{\sigma_1, \sigma_2\}/2$. We take $q(z)=q_1(z)q_2(z)$where $q_1,~q_2$ are coressponding $q's$ functions of $E_1,~E_2$.  Fix an $n$ $\geq$ $2N$ and suppose that $ V_n(E)$ $=$ $ V(z_1, z_2, \ldots, z_l, \zeta_1, \zeta_2, \ldots, \zeta_k)$ for $z_j$ $\in$ $E_1$, $\zeta_j$ $\in$ $E_2$, and $n$ $=$ $l + k$. It follows from definitions that
\[
 V_n^{1/n}(E) \leq  V_l^{1/n}(E_1)  V_k^{1/n}(E_2).
\]
We may assume that $l$ $\geq$ $k$. It implies that $l$ $\geq$ $n/2$ $\geq$ $N$. If $k$ $\leq$ $N$, we have
\[
 V_n^{1/n}(E) \leq C V_l^{1/n}(E_1) \leq C l^{-\sigma_1 l/n} \leq C (n/2)^{- \sigma_1/2} \leq C n^{-\sigma}.
\]
If $k$ $\geq$ $N$, we have
\begin{align*}
 V_n^{1/n}(E)
    &\leq  V_l^{1/n}(E_1)  V_k^{1/n}(E_2) \leq C l^{-\sigma_1 l/n}k^{-\sigma_2 k/n} \leq C \left(l^{-l/n}k^{-k/n}\right)^{-\sigma}\\
    &= C n^{-\sigma}\left((l/n)^{-l/n}(k/n)^{-k/n}\right)^{-\sigma} \leq C n^{-\sigma}.
\end{align*}
Here we have used the inequality $x^x(1-x)^{1-x}$ $\geq$ $1/2$ for all $x$ $\in$ $(0,1)$.
\end{enumerate}
The proof is complete.
\end{proof}

We conclude this section providing more sets $E$ satisfying the condition of Corollary \ref{CorollaryMnIsSmall}. For convenience, we recall some
definitions that Hayman used in constructing the function $f$ in Theorem \ref{TheoremHayman}.

\begin{definition} Let $E$ satisfy $(G)$. We write
\begin{eqnarray*}
E'=\{z=re^{i\theta}:~|\theta -\phi |< 1-r\mbox{ and }re^{i\phi} \in E \}.
\end{eqnarray*}

Next, for $0\leq\theta\leq 2\pi$, we define
\begin{eqnarray*}
\rho (\theta )=\sup \{\rho :~0\leq \rho <1,~\rho e^{i\theta}\in E'\}.
\end{eqnarray*}

Let $E_{\infty}$ be the set of $\theta$ such that $\rho (\theta )=1$. If $\theta \in E_\infty$ then $e^{i\theta}\in E_0$. So $m(E_\infty )=0$, where $m(.)$ is the Lebesgue's measure of the unit circle.

For each $1>r>0$ let $E_r$ be the set of all $\theta$ such that $0\leq \theta \leq 2\pi$ and $\rho (\theta )> r$. Then $E_r$ are open and contract with increasing $r$, and
\begin{eqnarray*}
\bigcap _{r}E_r=E_{\infty}.
\end{eqnarray*}
Thus
\begin{eqnarray*}
\lim _{r\rightarrow 1}m(E_r)=0.
\end{eqnarray*}
\end{definition}

Considering carefully the construction in the proof of Theorem 1 in \cite{hay} and Step 2 of the proof of Proposition \ref{CorrectnessOfGeneralizationProp} we can show that if the quantities $m(E_r)$ tend to $0$ sufficiently fast, then $M_n\leq Cn^{-\sigma}$. In particular, this claim is true if the following condition is satisfied
\begin{eqnarray*}
m(E_{\delta})\leq \frac{1}{-2\log \epsilon} \mbox{ if } \delta =1-K\sqrt{-\epsilon\log \epsilon},
\end{eqnarray*}
where $K$ is a positive constant. In fact, if this condition holds, the function $f$ is constructed in Theorem 1 in \cite{hay} will satisfy: if
$|f(z)|>\epsilon$ then $|z|\leq 1-K\sqrt{-\epsilon\log \epsilon}$. This last inequality ensures that $E$ satisfies conditions of Corollary
\ref{CorollaryMnIsSmall} (see proof of Proposition \ref{ConvergenceOfTildeV_TildeMProp}).
\section{One-point estimate}\label{SectionOnePointEstimate}
In this section we sketch how to obtain similar results for the case of one-point estimate, that is of estimating $C_p(E,\epsilon ,0)$ in
(\ref{Ce0definition}). There are two cases:

Case 1: $0\in E$. In this case it is easy to see that $C_p(E,\epsilon ,0)=\epsilon$.

Case 2: $0\not\in E$. In this case there exists $1>r>0$ such that if $z\in E$ then $|z|\geq r$. Then we can define similar set functions like those in
Section \ref{SectionSetFunctions} to obtain similar result to that of Theorem \ref{TheoremMain} and Corollaries \ref{CorollaryCaseEIsCompact},
\ref{CorollaryCaseEIsInStolzAngles} and \ref{CorollaryMnIsSmall}.
\section{Appendix: Case $m(E_0)>0$}\label{SectionAppendix}
In this section we present the proof of (\ref{EquationEstimateCaseE0HasPositiveMeasure}) when $m(E_0)>0$. We thank Professor Yuril Lyubarskii for showing
us this proof.
\begin{proof} (Of (\ref{EquationEstimateCaseE0HasPositiveMeasure}))

Since $m(E_0)>0$ the harmonic measure $\omega (z)$ of $E_0$ (see \cite{ransford}) satisfies: $\omega$ is a harmonic function in $\mathbb{D}$, $0<\omega
(z)<1$ for all $z\in \mathbb{D}$, (its boundary value) $\omega (z)=1$ a.e for $z\in E_0$, $\omega (z)=0$ for a.e $z\in
\partial \mathbb{D}\backslash E_0$. Let $v(z)$ be an analytic function with real part $\omega$.

For any $\varepsilon >0$ define
\begin{eqnarray*}
u_{\varepsilon }(z)=\exp \{\log \varepsilon \times v(z)\}=\varepsilon ^{v(z)}.
\end{eqnarray*}
Then $u_{\varepsilon}$ is analytic in $\mathbb{D}$, $0<|u_{\varepsilon }(z)|=\varepsilon ^{\omega (z)}<1$ for all $z\in \mathbb{D}$, $|u_{\varepsilon
}(z)|=\varepsilon$ a.e for $z\in E_0$, $|u_{\varepsilon }(z)|=1$ for a.e $z\in
\partial \mathbb{D}\backslash E_0$.

Let $f$ be any function in $\mathcal{A}^p$ with $|f(z)|\leq \varepsilon$ for all $z\in E$. Then $|f(z)|\leq \varepsilon$ a.e in $E_0$. Then
$f/u_{\varepsilon}$ is holomorphic in $\mathbb{D}$ and we have

\begin{eqnarray*}
\frac{1}{2\pi}\int _0^{2\pi}|\frac{f}{u_{\varepsilon }}(e^{it})|^pdt&=&\frac{1}{2\pi}\int _{t \in E_0}\frac{|f(e^{it})|^p}{|\varepsilon
|^p}dt+\frac{1}{2\pi}\int
_{t\not\in E_0}\frac{|f(e^{it})|^p}{1}dt\\
&\leq &\frac{1}{2\pi}\int _{t \in E_0}dt +\frac{1}{2\pi}\int _{0}^{2\pi}|f(e^{it})|^pdt\leq 2.
\end{eqnarray*}
Hence $||f/u_{\varepsilon}||_{H^p}\leq 2^{1/p}$. Applying (\ref{EquationEstimateForFunctionsInSpaceAp}) to $f/u_{\varepsilon}$ and use the definition of
$u_{\varepsilon}$ we obtain (\ref{EquationEstimateCaseE0HasPositiveMeasure}).
\end{proof}

\end{document}